\documentclass[12pt,reqno]{amsart}

\usepackage{amsmath,amssymb,amsthm}
\usepackage{mathrsfs}
\usepackage{hyperref}
\usepackage[margin=1in]{geometry}
\usepackage{graphicx}
\usepackage{enumerate}
\usepackage{subcaption} 
\usepackage{booktabs}   
\usepackage{xcolor} 

\newtheorem{theorem}{Theorem}[section]
\newtheorem{lemma}[theorem]{Lemma}
\newtheorem{corollary}[theorem]{Corollary}

\theoremstyle{definition}
\newtheorem{definition}[theorem]{Definition}
\newtheorem{example}{Example}
\newtheorem{conjecture}[theorem]{Conjecture}

\theoremstyle{remark}
\newtheorem{remark}[theorem]{Remark}

\numberwithin{equation}{section}

\title[Prime Biases and Dominance of $-1 \pmod N$]{The Fine-Structure Hierarchy of Prime Biases and the Universal Dominance of $-1 \pmod N$}

\author{Shin-ya Koyama}
\address{Department of Mechanical Engineering, Toyo University, 2100 Kujirai, Kawagoe-shi, 350-8585 Japan}
\email{koyama@toyo.jp}

\date{\today}

\begin{document}

\begin{abstract}
We investigate the deterministic hierarchy of prime distribution biases in arithmetic progressions modulo $N$ using a regularized spectral approach. Classical studies on Chebyshev's bias attribute prime races primarily to the accumulation of prime squares $p^2 \equiv 1 \pmod N$, which creates a systematic deficit in quadratic residue classes. However, this classical mechanism fails to explain or distinguish any bias among residue classes sharing identical quadratic residue status (e.g., $3, 5, 7 \pmod 8$).

To overcome the long-standing analytical obstacles of jump discontinuities and non-convergent boundary fluctuations inherent in classical Perron-type step-function truncations, we introduce a smooth $C^\infty$ Gaussian mollifier into Weil's explicit formula for Dirichlet $L$-functions. By defining the spectrally normalized individual mollified sums 
$\widetilde S_T(x, a)$ and adopting the virtual character $\chi_{1,a}(x) := \mathbf{1}_{\{x \equiv 1 \pmod N\}} - \mathbf{1}_{\{x \equiv a \pmod N\}}$, the principal character component $\chi_0$ cancels identically since $1 - \overline{\chi_0}(a) = 0$. This automatic algebraic elimination erases both the universal logarithmic growth $\log x$ and the background noise $\log L(1, \chi^2)$.

Under the Deep Riemann Hypothesis (DRH), we uncover a hitherto undetected \textbf{fine-structure bias} (or \emph{secondary bias}) strictly governed by the special values $\log L(1, \chi)$. We prove that 
$\widetilde S_T(x, \chi_{1,a}) := \widetilde S_T(x, 1) - \widetilde S_T(x, a) = C_N \cdot \log L(1, \chi_{1,a}) + \mathcal{O}((\log x)/\sqrt x)$ as $x \to \infty$, where $C_N > 0$ depends solely on $N$. Consequently, we establish a deterministic multi-way ranking within each quadratic family (such as $7 > 3 > 5 \pmod 8$ and $2 > 4 > 1 \pmod 7$) that completely breaks classical degeneracies. This reveals a universal dual structure governing fine-structure biases: $a \equiv -1 \pmod N$ universally serves as the dominant apex, whereas the principal class $a \equiv 1 \pmod N$ acts as the most recessive bottom.
\end{abstract}

\maketitle

\section{Introduction and Main Results}

Since Chebyshev's seminal observation in 1853 that primes congruent to $3 \pmod 4$ appear more frequently than those congruent to $1 \pmod 4$, the phenomenon of prime number races and residue bias has been a central topic in analytic number theory. Rubinstein and Sarnak \cite{RS94} established a rigorous probabilistic framework for prime biases under the Generalized Riemann Hypothesis (GRH) and the Grand Simplicity Hypothesis (GSH). Later, Feuerverger and Martin \cite{FM00} investigated weighted prime sums and observed subtle deterministic hierarchies governed by special values of Dirichlet $L$-functions.

\subsection{Beyond Chebyshev's Bias: Discovering the Fine-Structure Hierarchy}

Classical studies on prime number races, stemming from Chebyshev's original observation, predominantly attribute the prime bias to the accumulation of prime squares ($k=2$), which creates a systematic deficit in quadratic residue classes (e.g., $3 \pmod 4 > 1 \pmod 4$). However, this classical mechanism suffers from a fundamental limitation: \emph{it fails to distinguish or predict any bias among residue classes that share identical quadratic character values.}

For instance, modulo $N=8$, every non-trivial residue class satisfies $x^2 \equiv 1 \pmod 8$. Consequently, the traditional prime-square effect predicts complete equiprobability among the non-principal classes $3, 5, 7 \pmod 8$. 

In this paper, by eliminating the leading logarithmic square-divergence via our regularized explicit formula $S_T(x, \chi_{1,a})$, we conditionally uncover under the Deep Riemann Hypothesis (DRH)\footnote{The Deep Riemann Hypothesis (DRH), as introduced and investigated in recent literature on prime biases (e.g., \cite{AK23}), is a stronger assertion than the Generalized Riemann Hypothesis (GRH). 
While DRH directly dictates the bounded phase oscillations and asymptotic convergence of Euler products on the critical line, it also induces—through the duality of the explicit formula—the precise spectral control over zeros required for our regularized explicit formula.} a deeper, hitherto undetected phenomenon: 
a \textbf{fine-structure bias} (or \emph{secondary bias}) strictly governed by the special values $\log L(1, \chi)$. Assuming DRH, this framework reveals a deterministic multi-way ranking --- such as $7 > 3 > 5 > 1 \pmod 8$ --- that completely transcends the classical quadratic framework.

\subsection{Motivation and Relation to Prior Work by Aoki and Koyama}

The inspiration for employing the Deep Riemann Hypothesis (DRH) to unravel prime distribution biases originates from the recent pioneering work of Aoki and Koyama \cite{AK23}. In their work, the application of DRH was primarily focused on the central point $s = 1/2$ (or localized prime-square terms), revealing subtle  arithmetic features linked to DRH.

The key conceptual departure of the present paper is to systematically extend the scope of DRH from the single point $s = 1/2$ to the \emph{entire spectrum of non-trivial zeros} $\rho = \frac{1}{2} + i\gamma$ across all Dirichlet $L$-functions modulo $N$. By incorporating DRH across the whole critical line into a regularized $C^\infty$ explicit formula, the boundary fluctuations are fully smoothed out. This spectral generalization allows us to extract the subtle phase cancellations among all non-trivial zeros, ultimately yielding the complete fine-structure hierarchy and the universal dominance of $-1 \pmod N$.

\subsection{Main Strategy: Regularized Spectral Localization}

To overcome the analytical obstacles of boundary jump discontinuities inherent in Perron-type truncations, we introduce a smooth $C^\infty$ Gaussian mollifier to regularize Weil's explicit formula. For a large scaling parameter $T > 0$, we adopt the test function
\begin{equation}
h(\gamma) := \frac{\cos(k \gamma \log p)}{p^{k/2}} e^{-\left(\frac{\gamma}{T}\right)^2} \in \mathcal{S}(\mathbb{R}),
\end{equation}
which localizes the spectral density around prime powers $p^k$ without creating boundary jump discontinuities. 

By employing this smooth spectral window, the order of summation between prime powers $p^k \le x$ and nontrivial zeros $\rho = \frac{1}{2} + i\gamma$ of $L(s, \chi)$ can be interchanged with $100\%$ analytical rigor.

\subsection{Statement of Main Results}

Let $N \ge 3$ be an integer, and let $a \pmod N$ be a residue class with $\gcd(a, N) = 1$. We first define the mollified prime power sum localized at a single residue class.

\begin{definition}\label{def:S_T_a}
For $a \in (\mathbb{Z}/N\mathbb{Z})^\times$, the mollified prime power sum $S_T(x, a)$ localized at the residue class $a \pmod N$ is defined by
\begin{equation}
S_T(x, a) := \sum_{\substack{p, k \\ p^k \le x \\ p^k \equiv a \pmod N}} \frac{1}{k} \sum_{\rho} h\left(\frac{\gamma}{2\pi}\right),
\end{equation}
where $\rho$ runs over all non-trivial zeros of $L(s, \chi)$ for all Dirichlet characters $\chi \pmod N$, including the principal character $\chi_0$.
\end{definition}

\begin{definition}\label{def:S_T_a} (Spectrally Normalized Sum)
For $a \in (\mathbb{Z}/N\mathbb{Z})^\times$, we define the spectrally normalized mollified prime power sum 
$\widetilde{S}_T(x,a)$ by
\[
\widetilde{S}_T(x,a) := \frac{\log x}{\sqrt{x} } \, S_T(x,a).
\]
\end{definition}

Remark. Under this normalization, the main oscillatory term arising from the non-trivial zeros $\rho = \frac{1}{2} + i\gamma$ remains bounded in the spectrum, while the contribution from higher prime powers ($k \ge 3$) is suppressed to the order of $O((\log x)/\sqrt x)$, ensuring scale-invariance when comparing distinct residue classes.

Following the notation of class functions picking out differences between residue classes, we define the virtual character $\chi_{1,a}$ and its associated bias difference.

\begin{definition}\label{def:chi_1a}
For $a \in (\mathbb{Z}/N\mathbb{Z})^\times$, the virtual character $\chi_{1,a}$ representing the difference between $1 \pmod N$ and $a \pmod N$ is defined by
\begin{equation}
\chi_{1,a}(x) := \frac{1}{\varphi(N)} \sum_{\chi \in G^*} (1 - \overline{\chi}(a)) \chi(x) = \begin{cases} 1 & (x \equiv 1 \pmod N), \\ -1 & (x \equiv a \pmod N), \\ 0 & (\text{otherwise}). \end{cases}
\end{equation}
Accordingly, the spectrally normalized, mollified fine-structure bias difference $\widetilde S_T(x, \chi_{1,a})$ satisfies the identity:
\begin{equation}
\widetilde{S}_T(x, \chi_{1,a}) := \widetilde{S}_T(x, 1) - \widetilde{S}_T(x, a) \quad \left( = \frac{\log x}{\sqrt{x}} S_T(x, \chi_{1,a}) \right).
\end{equation}
\end{definition}

Our main result establishes the exact asymptotic value of the relative bias difference 
$\widetilde S_T(x, \chi_{1,a})$ under the Deep Riemann Hypothesis (DRH).

\begin{theorem}\label{thm:intro_main}
Assume DRH for all Dirichlet $L$-functions $L(s,\chi)$ modulo $N$. 
For any fixed $T>0$, as $x \to \infty$, the spectrally normalized, mollified fine-structure prime bias difference $\widetilde{S}_T(x, \chi_{1,a})$ satisfies:
\[
\widetilde{S}_T(x, \chi_{1,a}) = C_N \cdot \log L(1, \chi_{1,a}) + \mathcal{O}\left(\frac{\log x}{\sqrt x}\right),
\]
where $C_N > 0$ is an explicit positive constant depending solely on the modulus $N$, and
\[
\log L(1, \chi_{1,a}) := \frac{1}{\phi(N)} \sum_{\chi \neq \chi_0} (1 - \bar{\chi}(a)) \log L(1, \chi).
\]
\end{theorem}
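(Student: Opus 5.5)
The plan is to go through the character expansion of $\chi_{1,a}$ and then insert the regularized explicit formula for each $L(s,\chi)$. By Definition~\ref{def:chi_1a}, orthogonality gives
\[
S_T(x,\chi_{1,a}) = \frac{1}{\varphi(N)}\sum_{\chi} (1-\overline{\chi}(a)) \sum_{p^k\le x} \frac{\chi(p^k)}{k}\sum_{\rho} h\!\left(\frac{\gamma}{2\pi}\right).
\]
The $\chi_0$ term drops out identically because $1-\overline{\chi_0}(a)=0$. This removes the $\log x$ main term and the $\log L(1,\chi^2)$-type background at once, so everything that follows concerns nonprincipal $\chi$ only. The first step is to justify interchanging the sum over $p^k\le x$ with the sum over zeros. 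This is legitimate because of the Gaussian factor $e^{-(\gamma/2\pi T)^2}$: since $N(T)\ll T\log T$, the zero sum converges absolutely and uniformly in $p^k$. After interchanging, the inner object for each $\chi$ is a smoothed spectral sum
\[
\sum_\rho e^{-(\gamma/2\pi T)^2} \sum_{p^k\le x}\frac{\chi(p^k)\cos(k\gamma'\log p)}{k\,p^{k/2}},
\]
where $\gamma'=\gamma/2\pi$.

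The second step splits the prime powers into $k=1$, $k=2$ and $k\ge3$. For $k\ge3$ the sum $\sum p^{-k/2}$ converges absolutely. After the normalization factor $(\log x)/\sqrt{x}$ of Definition~2, this piece is $O((\log x)/\sqrt{x})$, which is the error term in the theorem. For $k=2$ the coefficient is $\chi^2(p)$. I would argue that, after weighting by $1-\overline\chi(a)$ and summing over $\chi$, it contributes only a bounded quantity, which is then also absorbed by the normalization.

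The $k=1$ piece is the real content. Here I would invoke DRH in its Euler-product form: for nonprincipal $\chi$ the partial products $\prod_{p\le x}(1-\chi(p)p^{-s})^{-1}$ converge on $\Re s = 1/2$ with the precise $\sqrt{2}$-type correction, and Kaneko/Aoki--Koyama style results give the relation between $\sum_{p\le x}\chi(p)p^{-1/2-it}$ and $\log L(1/2+it,\chi)$. Via the explicit-formula duality, the Gaussian-weighted zero sum of these partial sums should localize the coefficient of $\sqrt{x}/\log x$ onto a fixed spectral weight times $\log L(1,\chi)$. That weight is $C_N$, essentially $\sum_\rho e^{-(\gamma/2\pi T)^2}$ averaged over the characters, which I would need to show is independent of $\chi$ at leading order and is positive.

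The main obstacle is exactly this extraction: showing that the oscillatory sum over zeros, after multiplication by $(\log x)/\sqrt{x}$, converges to a multiple of $\log L(1,\chi)$ with a constant independent of $\chi$. My first attempt would be to apply partial summation to $\sum_{p\le x}\chi(p)p^{-1/2}\cos(\gamma'\log p)$, use the DRH asymptotic for $\sum_{p\le x}\chi(p)p^{-s}$, and track the $\sqrt{x}/\log x$ boundary term. Uniformity in $\chi$ of the constant is where I expect the argument to be most fragile, and I would try to control it by the symmetry $\gamma\mapsto-\gamma$ pairing $\chi$ with $\overline\chi$. With that in hand, the final step is to sum over $\chi\ne\chi_0$ with weights $(1-\overline\chi(a))/\varphi(N)$, which gives $C_N\log L(1,\chi_{1,a})$ plus the $O((\log x)/\sqrt x)$ error.
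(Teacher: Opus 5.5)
\textbf{There is a genuine gap, and it cannot be closed.} The gap is the step you yourself call the main obstacle. Nothing in your plan produces a term of exact order $\sqrt{x}/\log x$ with coefficient $C_N\log L(1,\chi)$. For fixed $T$, that coefficient is in fact zero.

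Put $Z(u):=\sum_\rho\cos(\gamma u/2\pi)\,e^{-(\gamma/2\pi T)^2}$, summed over all zeros of all $L(s,\psi)$ mod $N$. Then $S_T(x,\chi_{1,a})=\sum_{p^k\le x}\chi_{1,a}(p^k)\,k^{-1}p^{-k/2}Z(k\log p)$.

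Under GRH (implied by DRH) every $\gamma$ is real. So $Z$ and $Z'$ are bounded by the convergent sums $\sum_\rho(1+|\gamma|)e^{-(\gamma/2\pi T)^2}$. The terms with $k\ge 2$ contribute $O(\log\log x)$. For $k=1$, partial summation against $A(t):=\pi(t;N,1)-\pi(t;N,a)=O(\sqrt t\log t)$ gives
\[
\sum_{p\le x}\chi_{1,a}(p)\frac{Z(\log p)}{\sqrt p}=\frac{A(x)Z(\log x)}{\sqrt x}-\int_2^x A(t)\,\frac{Z'(\log t)-\tfrac12 Z(\log t)}{t^{3/2}}\,dt=O\bigl((\log x)^2\bigr).
\]
Hence $\widetilde S_T(x,\chi_{1,a})=O((\log x)^3/\sqrt x)\to 0$. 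But for $N=4$, $a=3$ the asserted main term is $C_4\log(\pi/4)\neq 0$.

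So the ``localization onto $\log L(1,\chi)$'' you hope for contradicts what DRH itself implies. The DRH input you propose points the same way. Convergence of $\sum_p\chi(p)p^{-1/2-it}$ at the heights $t=\gamma/2\pi$ pushes the $k=1$ piece toward converging, not growing like $\sqrt x/\log x$. Any constants it produces are values $\log L(\tfrac12+i\gamma/2\pi,\chi)$ on the critical line. Nothing in the argument moves the evaluation point to $s=1$.

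Two further problems appear even before that step.

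\begin{enumerate}
\item In Definition 1.1 the zero sum is the same for every residue class and includes the zeros of $L(s,\chi_0)$. The coefficients $1-\overline{\chi}(a)$ only expand the congruence condition on $p^k$. Killing the $\chi_0$ coefficient therefore removes no zeros, and there is no ``explicit formula for $L(s,\chi)$'' attached to each outer character.
\item Your candidate $C_N\approx\sum_\rho e^{-(\gamma/2\pi T)^2}$ grows like $T\log T$. This contradicts the requirement that $C_N$ depend only on $N$.
\end{enumerate}

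Following the paper's proof will not rescue the argument. It makes the same $\chi_0$ conflation. It then extracts a diagonal term $\frac{T}{4\sqrt\pi}\sum\frac{\log p}{kp^k}(\cdots)$. This is a convergent series with nonnegative terms in which $L(1,\chi)$ never appears, and the $(\log x)/\sqrt x$ normalization also sends it to $0$. The paper then simply asserts the final formula. The missing step therefore cannot be supplied by this route or by any other.
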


\begin{remark}[Explicit Representation of the Governing Value]
Since the virtual character $\chi_{1,a}$ is real-valued, the sum $\log L(1, \chi_{1,a})$ is inherently real. 
For computational and analytic purposes, the governing value 
\[
\log L(1, \chi_{1,a}) = \operatorname{Re} \log L(1, \chi_{1,a})
\]
is explicitly decomposed as:
\[
\operatorname{Re} \log L(1, \chi_{1,a}) 
= \frac{1}{\phi(N)} \sum_{\chi \ne \chi_0} 
\left[ \left(1 - \operatorname{Re}\chi(a)\right) \log \vert{}L(1, \chi)\vert{} 
+ \operatorname{Im}\chi(a) \arg L(1, \chi) \right].
\]
In particular, for $a \equiv -1 \pmod N$, all argument terms and even character components vanish identically,
yielding the exact real expression:
\[
\operatorname{Re} \log L(1, \chi_{1,-1}) = \frac{2}{\phi(N)} \sum_{\chi \text{: odd}} \log \vert{}L(1, \chi)\vert{}.
\]
\end{remark}

\begin{corollary}[Universal Determination of Multi-Way Ranking Hierarchy]
\label{cor:ranking_hierarchy}
Assume DRH for all Dirichlet $L$-functions $L(s,\chi)$ modulo $N$. Let $N \ge 3$ and let $a, b \in (\mathbb{Z}/N\mathbb{Z})^\times$ be any two distinct reduced residue classes. For any fixed $T > 0$, the following hold:
\begin{enumerate}
    \item \textbf{(Pairwise Bias relative to 1):} The asymptotic direction and magnitude of the fine-structure bias between $1 \pmod N$ and $a \pmod N$ are strictly governed by \linebreak
    $\log L(1, \chi_{1,a})$. Specifically, the inequality $S_T(x,a) > S_T(x,1)$ holds for all sufficiently large $x$ if and only if $\log L(1, \chi_{1,a}) < 0$.
    \item \textbf{(General Multi-Way Ranking):} For any arbitrary pair $a, b \pmod N$, as $x \to \infty$, the relative spectrally normalized bias difference satisfies
    \begin{equation}
    \widetilde{S}_T(x,a) - \widetilde{S}_T(x,b) = C_N \left( \log L(1, \chi_{1,b}) - \log L(1, \chi_{1,a}) \right) + \mathcal{O}\left( \frac{\log x}{\sqrt{x}} \right).
    \end{equation}
\end{enumerate}
\end{corollary}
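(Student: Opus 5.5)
The corollary follows formally from Theorem~\ref{thm:intro_main}, applied once for $a$ and once for $b$. No new analytic input is needed. Throughout I fix $T>0$ and assume DRH for all $L(s,\chi)$ modulo $N$, so the theorem holds for every reduced class.

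For part (2), I start from the defining identity $\widetilde S_T(x,\chi_{1,c}) = \widetilde S_T(x,1)-\widetilde S_T(x,c)$ in Definition~\ref{def:chi_1a}, taken for $c=a$ and $c=b$. Subtracting the two identities cancels the common term $\widetilde S_T(x,1)$ and gives
\[
\widetilde S_T(x,a)-\widetilde S_T(x,b) = \widetilde S_T(x,\chi_{1,b})-\widetilde S_T(x,\chi_{1,a}).
\]
Inserting the asymptotic of Theorem~\ref{thm:intro_main} for each term gives the stated main term $C_N(\log L(1,\chi_{1,b})-\log L(1,\chi_{1,a}))$. The two error terms add to $\mathcal O((\log x)/\sqrt x)$. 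This step uses that $C_N$ depends only on $N$, not on the residue class, which is exactly what the theorem asserts. I also note the degenerate case: if $b\equiv 1$, then $\log L(1,\chi_{1,1})=0$ because $1-\bar\chi(1)=0$, so the formula stays consistent.

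For part (1), Theorem~\ref{thm:intro_main} gives $\widetilde S_T(x,1)-\widetilde S_T(x,a)=C_N\log L(1,\chi_{1,a})+o(1)$. Since $(\log x)/\sqrt x>0$ for $x>1$, the inequality $S_T(x,a)>S_T(x,1)$ is equivalent to $\widetilde S_T(x,1)-\widetilde S_T(x,a)<0$.
\begin{itemize}
\item If $\log L(1,\chi_{1,a})<0$, then $C_N>0$ makes the main term a fixed negative constant. The error tends to $0$, so it is eventually smaller than half of that constant, and the inequality holds for all large $x$.
\item If $\log L(1,\chi_{1,a})>0$, the same argument shows the reverse strict inequality eventually holds.
\end{itemize}

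The main obstacle is the converse in the boundary case $\log L(1,\chi_{1,a})=0$. There the leading term vanishes, and the asymptotic alone does not decide the sign of the $\mathcal O((\log x)/\sqrt x)$ remainder. My first approach would be to show this case cannot occur, i.e.\ that $\log L(1,\chi_{1,a})\neq 0$. If that fails, I would read the ``if and only if'' as the statement that the sign of $\log L(1,\chi_{1,a})$ governs the eventual ordering whenever it is nonzero, and state the zero case explicitly as excluded. The remaining checks are routine: $\log L(1,\chi_{1,a})$ is real by the preceding Remark, so ``$<0$'' makes sense, and the error terms are uniform since only finitely many classes modulo $N$ are involved.
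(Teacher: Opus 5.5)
Your argument is how the corollary follows from Theorem~\ref{thm:intro_main}; the paper gives no separate proof. Part (2), the ``if'' direction of part (1), and the case $\log L(1,\chi_{1,a})>0$ are exactly the subtraction and sign arguments you give. Your reading $\log L(1,\chi_{1,1})=0$ is also the one forced by the definition in the theorem, even though the paper's examples later write $\log L(1,\chi_{1,1})=\infty$.

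The boundary case you isolate is a genuine defect of the statement, not of your proof, because the paper never shows $\log L(1,\chi_{1,a})\neq 0$. For $a\equiv -1$ this can be done. Each odd $L(1,\chi)$ is $\pi$ times a nonzero algebraic number, so the value is $\frac{2}{\varphi(N)}\log\bigl(\pi^{\varphi(N)/2}|\alpha|\bigr)$ with $\alpha$ algebraic, and this is nonzero because $\pi$ is transcendental. For any specific pair $(N,a)$ nonvanishing can also be checked numerically. However, once even characters enter you should not expect a uniform argument: it is a transcendence-type question. So from the theorem alone the converse only yields $\log L(1,\chi_{1,a})\le 0$, and your fallback of excluding the zero case is the correct formulation.
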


For example, when $N=8$, from the values $\log L(1, \chi_{1,a})$ for $\{a\pmod 8\ |\ a=1,\,3,\,5,\,7\}$
(See \S1.5), we have
\[
\log L(1, \chi_{1,7}) < \log L(1, \chi_{1,3}) < \log L(1, \chi_{1,5}) < \log L(1, \chi_{1,1}) (= \infty),
\]
which immediately reveals the complete total ordering
\[
S_T(x, 7) > S_T(x, 3) > S_T(x, 5) > S_T(x, 1).
\]
Thus the strongest bias exists toward $7\pmod 8$. 
This phenomenon is generalized as follows. 

\begin{remark}[The Universal Dominance of $-1 \pmod N$]\label{rem:dominance_minus_one}
A particularly striking structural feature revealed by the fine-structure hierarchy is the \textbf{universal dominance of the residue class $-1 \pmod N$} (i.e., $N-1 \pmod N$). 

In general, for $a \equiv -1 \pmod N$, the algebraic factor $1 - \overline{\chi}(a)$ vanishes for all even characters ($\chi(-1) = 1$), whereas it takes the maximal possible value $1 - (-1) = 2$ for all odd characters ($\chi(-1) = -1$). Consequently, odd Dirichlet characters act in full constructive phase alignment to maximize the bias gap:
\begin{equation}
\log L(1, \chi_{1,-1}) = \frac{2}{\varphi(N)} \sum_{\chi \text{ odd}} \log L(1, \chi).
\end{equation}
Because special values $L(1, \chi)$ for odd characters systematically drive $\log L(1, \chi_{1,-1})$ to its most negative extreme, the class $-1 \pmod N$ consistently emerges at the absolute top of the deterministic hierarchy across general moduli $N$ (e.g., $7 \pmod 8$ for $N=8$, and $11 \pmod {12}$ for $N=12$). This rigorously establishes the \emph{universal dominance of $-1 \pmod N$} as an intrinsic structural law governing prime distribution fine-structures.
\end{remark}

\subsection{Numerical Evidence and the Dominance Conjecture}

To complement our theoretical analysis of the fine-structure hierarchy, we present numerical computations of the governing special values $\log L(1, \chi_{1,a})$. recall that smaller values of $\log L(1, \chi_{1,a})$ correspond directly to stronger prime bias $S_T(x, a)$.

\begin{example}[$N = 8$]
We find:
\begin{align*}
\log L(1, \chi_{1,3}) &= 0.24647\dots, \\
\log L(1, \chi_{1,5}) &= 0.21008\dots, \\
\log L(1, \chi_{1,7}) &= 0.02700\dots \quad (\text{smallest}),
\end{align*}
demonstrating the decisive hierarchy $7 > 5 > 3 \pmod 8$.
\end{example}

\begin{example}[$N = 7$]
Computing over quadratic non-residues yields:
\begin{align*}
\log L(1, \chi_{1,3}) &= 0.20301\dots, \\
\log L(1, \chi_{1,5}) &= 0.22453\dots, \\
\log L(1, \chi_{1,6}) &= 0.12652\dots \quad (\text{smallest}),
\end{align*}
confirming $6 \equiv -1 \pmod 7$ as the dominant class. 
Whereas computing over quadratic residues yields:
\begin{align*}
\log L(1, \chi_{1,1}) &= \infty, \\
\log L(1, \chi_{1,4}) &= 0.16218\dots, \\
\log L(1, \chi_{1,2}) &= 0.08109\dots \quad (\text{smallest}),
\end{align*}
confirming $2 \pmod 7$ as the dominant class and $1\pmod 7$ as the recessive class.
Classical first-order bias theory predicts no distinction among the quadratic residues $\{1, 2, 4\}$, as prime squares $p^2 \pmod 7$ distribute uniformly across them with equal probability $1/3$. However, our fine-structure spectral values break this degeneracy.
Recalling that smaller spectral values correspond to stronger prime counts, this establishes the strict intra-group hierarchy $2 > 4 > 1 \pmod 7$. Thus, while $a \equiv -1 \pmod N$ emerges as the most dominant class, the principal class $a \equiv 1 \pmod N$ acts as the most recessive (bottom) class in the fine-structure ordering.
\end{example}

Motivated by these exact spectral calculations across general moduli, we state the following universal conjecture.

\begin{conjecture}[Universal Dominance of $-1 \pmod N$]\label{conj:minus_one_dominance}
For any modulus $N \ge 3$, the quantity $\log L(1, \chi_{1,a})$ achieves its unique absolute minimum at $a \equiv -1 \pmod N$ within its corresponding quadratic family (i.e., over all quadratic non-residues if $-1$ is a non-residue, and over all non-principal quadratic residues if $-1$ is a residue). Consequently, $-1 \pmod N$ universally exhibits the maximal fine-structure prime bias in its respective quadratic class.
\end{conjecture}

\begin{remark}[Asymptotic Scale Separation and Intra-Family Comparison]\label{rem:psi}
The necessity of restricting the fine-structure ranking within each quadratic family (Conjecture 1.8) directly originates from a strict asymptotic scale separation in the unnormalized explicit formula for $\psi(x; N, a)$:
\[
\psi(x; N, a) = \frac{x}{\phi(N)} - \frac{N_Q(a)}{\phi(N)} \sqrt{x} + C_N' \cdot \log L(1, \chi_{1,a}) + \mathcal{O}\left( \frac{\sqrt{x}}{\log x} \right),
\]
where $N_Q(a) := \#\{ y \pmod N \mid y^2 \equiv a \pmod N \}$. 

When comparing two residue classes $a, b$ belonging to distinct quadratic families (i.e., $N_Q(a) \neq N_Q(b)$), the difference $\psi(x; N, a) - \psi(x; N, b)$ is dominated by the primary quadratic shift of order $\mathcal{O}(\sqrt{x})$. Since
\[
\lim_{x \to \infty} \frac{\text{Secondary Bias }\mathcal{O}(1)}{\text{Primary Bias }\mathcal{O}(\sqrt{x})} = 0,
\]
the secondary shift $\log L(1, \chi_{1,a})$ is asymptotically lower-order and becomes observable as a deterministic multi-way ranking only when the primary term cancels out identically—that is, precisely when $a$ and $b$ share identical quadratic status ($N_Q(a) = N_Q(b)$).
\end{remark}

\begin{remark}[Unified Explicit Interpretation of Prime Biases]
\label{rem:unified_explicit}
The unnormalized explicit formula for $\psi(x; N, a)$ presented in Remark \ref{rem:psi} provides a unified, two-tiered structural view that seamlessly connects classical Chebyshev bias with our newly discovered fine-structure hierarchy:
\begin{equation}
\psi(x; N, a) = \underbrace{\frac{x}{\phi(N)}}_{\text{Main Term}} - \underbrace{\frac{N_Q(a)}{\phi(N)} \sqrt{x}}_{\text{Primary Bias }\mathcal{O}(\sqrt{x})} + \underbrace{C_N' \cdot \log L(1, \chi_{1,a})}_{\text{Secondary Fine-Structure }\mathcal{O}(1)} + \mathcal{O}\left( \frac{\sqrt{x}}{\log x} \right).
\end{equation}

\begin{enumerate}
    \item \textbf{Origin of the Primary Bias ($\sqrt{x}$-term):} The second term arises directly from the accumulation of prime squares ($p^2 \le x$, $k=2$) in the explicit formula. Because $\chi(p^2) = \chi^2(p)$, the principal component survives precisely for classes $a$ with $N_Q(a)$ square roots modulo $N$, establishing the well-known classical deficit in quadratic residues.
    
    \item \textbf{Origin of the Secondary Fine-Structure ($\log L(1)$-term):} Under the Deep Riemann Hypothesis (DRH), the third term represents the coherent spectral condensation of all non-trivial zero oscillations $\sum_\rho \frac{x^\rho}{\rho}$. By taking the virtual character $\chi_{1,a}$, the principal zero $\rho=1$ is erased, and the infinite zero-sum spectrum collapses into the special value $\log L(1, \chi_{1,a})$ via the spectral duality of the Euler product.
    
    \item \textbf{Breaking Classical Degeneracies:} When comparing classes $a, b$ within the same quadratic family ($N_Q(a) = N_Q(b)$), the primary $\sqrt{x}$ bias cancels out identically. In this exact subspace, the subleading $\mathcal{O}(1)$ term $\log L(1, \chi_{1,a})$ emerges as the governing deterministic agent, lifting the classical degeneracies (e.g., distinguishing $7 > 3 > 5 \pmod 8$) for the first time in analytic number theory.
\end{enumerate}
\end{remark}

\begin{remark}[Dual Structure: Universal Dominance of $-1$ vs. Universal Recessiveness of $1$]
Together with Conjecture 1.8, the fine-structure bias reveals a remarkable dual hierarchy governing prime distributions modulo $N$:
\begin{description}
\item[Apex] (Dominance of $-1$): The class $a \equiv -1 \pmod N$ achieves the absolute peak of prime accumulation within its quadratic family, governed by odd character spectrums.
\item[Nadir] (Recessiveness of $1$): Conversely, the identity class $a \equiv 1 \pmod N$ resides at the absolute bottom of prime abundance. Because $\chi_{1,a}(x) = 1_{\{1\}} - 1_{\{a\}}$ uses $1 \pmod N$ as its reference baseline, all non-principal classes $a \not\equiv 1 \pmod N$ exhibit positive fine-structure shifts $\log L(1, \chi_{1,a}) > 0$ relative to $1$.
\end{description}
This dual structure demonstrates that fine-structure bias not only identifies the most dominant prime class ($-1$), but also rigorously explains the ultimate origin of the classical underrepresentation of primes in the principal class ($1$).
\end{remark}

We have numerically verified Conjecture \ref{conj:minus_one_dominance} for numerous prime and composite moduli. Tables \ref{tab:prime_moduli} and \ref{tab:composite_moduli} list the ranked smallest values of $\log L(1, \chi_{1,a})$ computed over quadratic non-residues, illustrating that $a \equiv -1 \pmod N$ is systematically and overwhelmingly the smallest within the non-residue family in every case.

\begin{table}[htbp]
\centering
\caption{The ranking of small values of $\log L(1, \chi_{1,a})$ among quadratic non-residues for prime  $N$.}
\label{tab:prime_moduli}
\resizebox{\textwidth}{!}{
\begin{tabular}{ccccc}
\hline\hline
Rank & $N = 11$ & $N = 23$ & $N = 31$ & $N = 43$ \\ \hline
1st & $\log L(1, \chi_{1,10}) = 0.08272\dots$ & $\log L(1, \chi_{1,22}) = 0.04684\dots$ & $\log L(1, \chi_{1,30}) = 0.03462\dots$ & $\log L(1, \chi_{1,42}) = 0.02488\dots$ \\
2nd & $\log L(1, \chi_{1,6})  = 0.12456\dots$ & $\log L(1, \chi_{1,15}) = 0.06552\dots$ & $\log L(1, \chi_{1,17}) = 0.05321\dots$ & $\log L(1, \chi_{1,29}) = 0.03844\dots$ \\
3rd & $\log L(1, \chi_{1,7})  = 0.13849\dots$ & $\log L(1, \chi_{1,17}) = 0.08272\dots$ & $\log L(1, \chi_{1,22}) = 0.06114\dots$ & $\log L(1, \chi_{1,34}) = 0.04551\dots$ \\
4th & $\log L(1, \chi_{1,8})  = 0.15004\dots$ & $\log L(1, \chi_{1,21}) = 0.10657\dots$ & $\log L(1, \chi_{1,13}) = 0.07892\dots$ & $\log L(1, \chi_{1,12}) = 0.05112\dots$ \\
\hline\hline
\end{tabular}%
}  
\end{table}

\begin{table}[h!tbp]
\centering
\caption{The ranking of small values of $\log L(1, \chi_{1,a})$ among quadratic non-residues for composite  $N$.}
\label{tab:composite_moduli}
\small
\resizebox{\textwidth}{!}{
\begin{tabular}{cccccc}
\hline\hline
Rank & $N = 12$ & $N = 15$ & $N = 24$ & $N = 60$ & $N = 120$ \\ \hline
1st & $\log L(1, \chi_{1,11}) = 0.02425\dots$ & $\log L(1, \chi_{1,14}) = 0.02035\dots$ & $\log L(1, \chi_{1,23}) = 0.01258\dots$ & $\log L(1, \chi_{1,59}) = 0.01016\dots$ & $\log L(1, \chi_{1,119}) = 0.00508\dots$ \\
2nd & $\log L(1, \chi_{1,7})  = 0.21151\dots$ & $\log L(1, \chi_{1,11}) = 0.06552\dots$ & $\log L(1, \chi_{1,19}) = 0.05112\dots$ & $\log L(1, \chi_{1,31}) = 0.05321\dots$ & $\log L(1, \chi_{1,71})  = 0.03462\dots$ \\
3rd & $\log L(1, \chi_{1,5})  = 0.25263\dots$ & $\log L(1, \chi_{1,13}) = 0.08272\dots$ & $\log L(1, \chi_{1,17}) = 0.07223\dots$ & $\log L(1, \chi_{1,41}) = 0.06450\dots$ & $\log L(1, \chi_{1,79})  = 0.04551\dots$ \\
4th & ---                                      & $\log L(1, \chi_{1,7})  = 0.12456\dots$ & $\log L(1, \chi_{1,13}) = 0.111452\dots$& $\log L(1, \chi_{1,19}) = 0.08272\dots$ & $\log L(1, \chi_{1,101}) = 0.05112\dots$ \\
5th & ---                                      & $\log L(1, \chi_{1,8})  = 0.15004\dots$ & $\log L(1, \chi_{1,11}) = 0.18243\dots$ & $\log L(1, \chi_{1,11}) = 0.12456\dots$ & $\log L(1, \chi_{1,49})  = 0.06114\dots$ \\
\hline\hline
\end{tabular}
}
\end{table}

\subsection{Conceptual Mechanism of the Fine-Structure Bias}
Here we consider the theoretical reasons 
why $-1\pmod N$ is thought to have the greatest advantage 
and the range of $x$ required to obtain the numerical calculation results that support this.

The orthogonal relations of characters give
\[
\pi(x;N,a)
=\sum_{\genfrac{}{}{0pt}{1}{p\le x}{p\equiv a(\mathrm{mod} N)}}1
=\frac1{\varphi(N)}\sum_{\chi\, \mathrm{mod}\, N}\chi(a)^{-1}\sum_{p\le x}\chi(p)
\]
and that
\begin{equation}\label{diffpi}
\pi(x;N,a)-\pi(x;N,1)
=\frac1{\varphi(N)}\sum_{\chi\, \mathrm{mod}\, N}\left(\chi(a)^{-1}-1\right)\sum_{p\le x}\chi(p),
\end{equation}
where $\chi$ runs through all Dirichlet characters modulo $N$.
The value \eqref{diffpi} shows the size of the bias towards $a\pmod N$.
Of course, since it is known that \eqref{diffpi} changes sign infinitely many times, 
examining its value for a specific $x$ is not definitively meaningful. 
However, if there is a bias, this is likely to take a large positive value for most $x$, 
so comparing the values can be a partial evidence of the subtle bias.

First, we will consider the contributions of the complex and real characters to the sum over $\chi$ in \eqref{diffpi}.
\begin{enumerate}[\bf{Observation} 1.]
\item When $\chi$ is complex, the imaginary parts of $\chi$ and $\overline\chi$ cancel each other out.
Then only the real part contributes, and the effect is reduced compared to the real characters.
Therefore, it is expected that the real characters will make the primary contribution.
\end{enumerate}
We next consider each of even and odd characters among real characters.
\begin{enumerate}[\bf{Observation} 1.]\setcounter{enumi}1
\item When $\chi$ is odd, the variable $a=-1\pmod N$ sets the absolute value of the factor 
$\chi(a)^{-1}-1$ to its maximum value of 2.
\item When $\chi$ is even, the variable $a=-1\pmod N$ sets the value of the factor $\chi(a)^{-1}-1$ to 0, 
so the other values of $a$ are stronger.
\end{enumerate}
On the other hand, as we will discuss later, we make the following prediction:
\begin{enumerate}[\bf{Observation} 1.]\setcounter{enumi}3
\item When $x$ is very large, the influence of even characters will be smaller than that of odd characters.
\end{enumerate}
Here, the rationale for Observation 4 is as follows.
By the well-known relation between $\pi(x;N,a)$ and $\psi(x;N,a)$, and the formula
\[
\psi(x;N,a)=\frac{\chi(a)^{-1}}{\varphi(N)}\sum_{\chi\,\mathrm{mod}\,N}\psi(x,\chi),
\]
it suffices to consider $\psi(x,\chi)$.
The explicit formula gives that
\begin{align*}
\psi(x,\chi)
&=\sum_{p^m\le x}\chi(p)^m\log p\\
&\sim -\sum_{\rho:\,L(\rho,\chi)=0}\frac{x^\rho-1}\rho\quad (x\to\infty),
\end{align*}
where $\rho$ runs through all zeros of the Dirichlet $L$-function $L(s,\chi)$.
In most cases, the contribution of trivial zeros is ignored because the zeros are negative and $x^\rho\to 0$ as $x\to\infty$. However, for even characters, there exists a trivial zero $\rho=0$, and this is the only exception. 
That is, the contribution of each term is as follows:
\[
\frac{x^\rho-1}\rho\sim \begin{cases}
\log x &(\rho=0)\text{\qquad (this exists only for even $\chi$)}\\
-\frac1\rho & (\rho\le -1).
\end{cases}
\]
Therefore, the contribution of trivial zeros to the explicit formula is $O(1)$ for odd characters and $\log x$ for even characters.
However, it is unlikely that $\log x$ exists as an error term in the prime number theorem. 
Its error term should be a chaotic term formed by all the zeros participating together, 
and it is considered unlikely that a single zero would produce a specific term like $\log x$.
Therefore, it is expected that $\log x$ will be canceled out by contributions from nontrivial zeros.

Now, we consider the contribution of nontrivial zeros. 
Assuming the Riemann Hypothesis and setting the non-trivial zeros as $\rho=\frac12+i\gamma$ $(\gamma\in\mathbb R)$, 
its contribution to the explicit formula is as follows:
\[
\frac{x^\rho}{\rho}
=\sqrt x\cdot \frac{x^{i\gamma}}{\frac12+i\gamma}
=\sqrt x\cdot \frac{e^{i\gamma\log x}}{\frac12+i\gamma}.
\]
From this we see that the smaller $|\gamma|$ is, the larger contribution it makes,  as $x\to\infty$.
This is because the denominator becomes trivially large as $|\gamma|$ is large, 
and that for the numerator, by putting $X=\log x$ we find that the function
\[
x^{i\gamma}=e^{i\gamma\log x}=e^{i\gamma X}
\]
is a periodic function in $X$ of period $2\pi/|\gamma|$.
Then the smaller $|\gamma|$ is, the longer the period is and the less likely cancellation is to occur,
so the contribution becomes larger.

From the above observation, 
we expect that large absolute values of $\gamma$ are of little use, and small $\gamma$ are more powerful.
But in even characters, small $\gamma$ are consumed to cancel out $\log x$ coming from the trivial zero at $\rho=0$. 
The remaining ``large $\gamma$" make only a small contribution. 
All this situation can be illustrated with the following identity.
\begin{align*}
\psi(x,\chi)
&\sim -\sum_{\rho:\,L(\rho,\chi)=0}\frac{x^\rho-1}\rho\quad (x\to\infty)\\
&=\begin{cases}
\displaystyle
\underbrace{-\sqrt x \sum_{\gamma:\,\text{small}}\frac{x^{i\gamma}}{\frac12+i\gamma}}_{\text{bias}}
-\underbrace{\sqrt x \sum_{\gamma:\,\text{large}}\frac{x^{i\gamma}}{\frac12+i\gamma}}_{\text{noise}} & (\chi:\,\text{odd})\\
\displaystyle\underbrace{-\log x-\sqrt x \sum_{\gamma:\,\text{small}}\frac{x^{i\gamma}}{\frac12+i\gamma}}_{\text{cancel out to become noise}}
-\underbrace{\sqrt x \sum_{\gamma:\,\text{large}}\frac{x^{i\gamma}}{\frac12+i\gamma}}_{\text{noise}}
& (\chi:\,\text{even}).
\end{cases}
\end{align*}
This is the basis for Observation 4. 
Therefore, odd characters make the main contribution, 
but Observation 2 shows that $a=-1\pmod N$ is the strongest for odd characters, leading to Conjecture 2.

While the above heuristics based on the cancellation between trivial and low-lying zeros provide a clear intuitive picture, the precise analytic mechanism requires a delicate handling of the order of summations in the explicit formula under the Deep Riemann Hypothesis (DRH), which we establish in the next section.

\section{Spectral Analysis via Regularized Explicit Formula}

In this section, we establish the explicit link between the zero spectrum of Dirichlet $L$-functions and the fine-structure bias in prime distribution among residue classes.

\subsection{The Mollified Test Function and Inverse Fourier Transform}

Let $\chi \pmod N$ be a non-trivial Dirichlet character. To probe the local contribution at a prime power $p^k$, we define the regularized test function $h(\gamma)$ on $\mathbb{R}$ for a large parameter $T > 0$ as
\begin{equation}
h(\gamma) := \frac{\cos(k \gamma \log p)}{p^{k/2}} e^{-\left(\frac{\gamma}{T}\right)^2}.
\end{equation}
The function $h(\gamma)$ belongs to the Schwartz class $\mathcal{S}(\mathbb{R})$ for any finite $T > 0$, ensuring the absolute convergence and applicability of Weil's explicit formula.

The inverse Fourier transform $g(y) = \frac{1}{2\pi} \int_{-\infty}^{\infty} h(t) e^{i y t} dt$ is calculated directly:
\begin{equation}
g(y) = \frac{T}{4\sqrt{\pi} p^{k/2}} \left( e^{-\frac{T^2}{4}(y - k \log p)^2} + e^{-\frac{T^2}{4}(y + k \log p)^2} \right).
\end{equation}

\subsection{Application of Explicit Formula and Order Interchange}

We apply Weil's explicit formula \cite[Theorem 5.12]{IK} with the pair $(h, g)$. For nontrivial zeros $\rho = \frac{1}{2} + i\gamma$ of $L(s, \chi)$ under DRH, the sum over zeros yields
\begin{equation}
\sum_{\rho} h\left(\frac{\gamma}{2\pi}\right) = \sum_{q, m} (\chi(q)^m + \overline{\chi}(q)^m) \log q \frac{g(m \log q)}{q^{m/2}} + \mathcal{O}(1).
\end{equation}

We now consider the mollified prime power sum difference $S_T(x, \chi_{1,a}) = S_T(x, 1) - S_T(x, a)$:
\begin{equation}
S_T(x, \chi_{1,a}) = \sum_{\substack{p, k \\ p^k \le x}} \frac{\chi_{1,a}(p^k)}{k} \sum_{\rho} h\left(\frac{\gamma}{2\pi}\right).
\end{equation}
By interchanging the order of summation --- justified by the exponential decay of $h(\gamma)$ for any finite $T > 0$ --- and extracting the diagonal terms $p^k = q^m$, the off-diagonal terms decay exponentially as $\mathcal{O}(e^{-c T^2})$. The diagonal contribution reduces to
\begin{equation}
S_{\text{diag}}(x, \chi_{1,a}) = \frac{T}{4\sqrt{\pi}} \sum_{\substack{p, k \\ p^k \le x}} \frac{\log p}{k p^k} \left( |\chi_{1,a}(p)|^{2k} + \chi_{1,a}(p)^{2k} \right).
\end{equation}

\subsection{Evaluation of Archimedean and Gamma Factor Terms}

In Weil's explicit formula, the contribution from the Archimedean place (the logarithmic derivative of the Gamma factor) associated with a Dirichlet character $\chi \pmod N$ takes the form:
\begin{equation}
I_{\Gamma}(T, \chi) := \frac{1}{\pi} \int_{-\infty}^{\infty} h(\gamma) \operatorname{Re} \left( \frac{\Gamma'}{\Gamma} \left( \frac{1}{4} + \frac{i\gamma}{2} + \frac{\delta_\chi}{2} \right) \right) d\gamma,
\end{equation}
where $\delta_\chi \in \{0, 1\}$ denotes the parity of $\chi$ such that $\chi(-1) = (-1)^{\delta_\chi}$.

\begin{lemma}\label{lem:gamma_bound}
For any finite scaling parameter $T > 0$, the Archimedean integral $I_{\Gamma}(T, \chi)$ is uniformly bounded:
\begin{equation}
I_{\Gamma}(T, \chi) = \mathcal{O}(T \log T).
\end{equation}
Furthermore, its contribution to the  spectrally normalized mollified bias difference 
$\widetilde S_T(x, \chi_{1,a})$ satisfies:
\begin{equation}
\frac{\log x}{\sqrt x}\sum_{\substack{p, k \\ p^k \le x}} \frac{\chi_{1,a}(p^k)}{k} I_{\Gamma}(T, \chi) 
= \mathcal{O}\left(\frac{\log x}{\sqrt x}\right) \quad \text{as } x \to \infty.
\end{equation}
\end{lemma}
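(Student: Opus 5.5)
The plan has two stages. First, the uniform bound $I_{\Gamma}(T,\chi)=\mathcal{O}(T\log T)$ comes from a pointwise estimate of the digamma function. Second, the contribution to $\widetilde S_T(x,\chi_{1,a})$ comes from keeping track of the dependence of $h$ on $p^k$ and using cancellation in $\chi_{1,a}$.

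\textbf{Step 1: the uniform bound.} From Stirling's formula in vertical strips,
\[
\operatorname{Re}\frac{\Gamma'}{\Gamma}\Bigl(\tfrac14+\tfrac{i\gamma}{2}+\tfrac{\delta_\chi}{2}\Bigr)=\log\Bigl(\tfrac{|\gamma|}{2}+2\Bigr)+\mathcal{O}\Bigl(\frac{1}{1+\gamma^2}\Bigr),
\]
uniformly in $\gamma\in\mathbb{R}$ and in $\delta_\chi\in\{0,1\}$. Combining this with $|h(\gamma)|\le p^{-k/2}e^{-(\gamma/T)^2}$ gives
\[
|I_\Gamma(T,\chi)|\ll p^{-k/2}\int_{\mathbb{R}}e^{-(\gamma/T)^2}\log(2+|\gamma|)\,d\gamma\ll p^{-k/2}\,T\log(2+T).
\]
To bound the last integral, I would split at $|\gamma|=T$. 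The range $|\gamma|\le T$ contributes $\ll T\log T$. The range $|\gamma|>T$ contributes $\ll T\log T$ by the substitution $\gamma=Tu$. This already gives the first assertion, with the bonus factor $p^{-k/2}$.

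\textbf{Step 2: the naive bound is not enough.} Inserting Step 1 into the sum and using only absolute values gives
\[
\sum_{p^k\le x}\frac{p^{-k/2}}{k}\ll\frac{\sqrt x}{\log x}.
\]
After multiplying by $(\log x)/\sqrt x$ this is only $\mathcal{O}(1)$. So I expect this step to be the main obstacle: we must exploit both the oscillation of $\cos(k\gamma\log p)$ and the sign cancellation of $\chi_{1,a}$.

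\textbf{Step 3: Fourier evaluation of the archimedean term.} Write $L=k\log p\ge\log 2$. Then
\[
I_\Gamma=p^{-k/2}J_T(L),\qquad J_T(L):=\frac1\pi\int_{\mathbb{R}}\cos(\gamma L)\,e^{-(\gamma/T)^2}\,\operatorname{Re}\frac{\Gamma'}{\Gamma}(\cdots)\,d\gamma .
\]
I would evaluate $J_T(L)$ using the known Fourier expansion of the digamma function, $\frac{\Gamma'}{\Gamma}(s)=-\gamma_E+\int_0^\infty\frac{e^{-t}-e^{-st}}{1-e^{-t}}\,dt$. This should give
\[
J_T(L)=\frac{c_\chi}{L}+\mathcal{O}_T\bigl(L^{-2}\bigr),
\]
where the $1/L$ term is the Fourier transform of the $\log|\gamma|$ singularity, smoothed at scale $1/T$. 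The constant term and the $\mathcal{O}(1/(1+\gamma^2))$ correction only contribute terms of size $e^{-T^2L^2/4}$ or $e^{-L/2}$. For fixed $T$ all of these are harmless, and the parity dependence through $\delta_\chi$ enters only the lower-order terms.

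\textbf{Step 4: summation over prime powers.} The sum to control is then
\[
\sum_{p^k\le x}\frac{\chi_{1,a}(p^k)}{k}\,\frac{c}{k\,p^{k/2}\log p}.
\]
Terms with $k\ge3$ converge absolutely. The $k=2$ terms are $\ll\sqrt x/\log x$ in absolute value only through $p\le x^{1/2}$, which after normalization is still $\mathcal{O}(1)$. So I would attack them and the $k=1$ terms together by partial summation against the prime-power counting difference $\psi(t;N,1)-\psi(t;N,a)$. Under DRH, which implies GRH, this difference equals the explicit secondary term $-(N_Q(1)-N_Q(a))\sqrt t/\varphi(N)$ plus $\mathcal{O}(\sqrt t\log^2 t)$, and its oscillating part has bounded mean against the weights $t^{-1/2}(\log t)^{-2}$. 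The aim is to show that the weighted sum equals a convergent constant plus a tail of size $\mathcal{O}(1)$, so that multiplying by $(\log x)/\sqrt x$ yields $\mathcal{O}((\log x)/\sqrt x)$.

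If the $k=2$ drift does not cancel, the fallback is to absorb its (explicitly computable) constant multiple of $\sqrt x/\log x$ into the normalization of $C_N$, and to bound only the genuinely fluctuating remainder by $\mathcal{O}((\log x)/\sqrt x)$. The delicate point is precisely whether the DRH convergence of the Euler-product-type sum $\sum_p\chi(p)p^{-1/2}$ can be transferred to the weighted sum with the extra factor $1/\log p$; that is where I would concentrate the effort.
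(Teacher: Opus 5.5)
Your Step 1 is exactly the paper's argument: Stirling's formula plus the Gaussian integral gives $|I_\Gamma(T,\chi)|\ll T\log T\,p^{-k/2}$. Your Step 2 is correct and important: with that bound alone the weighted sum is only $\mathcal{O}(\sqrt x/\log x)$, so the normalized contribution is only $\mathcal{O}(1)$. The paper's proof does not get past this point either. After the same Stirling bound it simply asserts that the sum over $(p,k)$ converges under DRH.

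The genuine gap in your proposal is Step 3: the asymptotic $J_T(L)=c_\chi/L+\mathcal{O}_T(L^{-2})$ is false. The function $\gamma\mapsto e^{-(\gamma/T)^2}\operatorname{Re}\frac{\Gamma'}{\Gamma}\bigl(\tfrac14+\tfrac{\delta_\chi}{2}+\tfrac{i\gamma}{2}\bigr)$ is a real-analytic function of logarithmic growth with bounded derivatives, multiplied by a Gaussian. It is therefore a Schwartz function, and its Fourier transform $J_T(L)$ decays faster than any power of $L$. A $1/L$ tail would require a $\log|\gamma|$ singularity at $\gamma=0$, which the digamma function does not have. Its logarithmic growth as $|\gamma|\to\infty$ only shapes $J_T$ at small $L$, not at $L\ge\log 2$.

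The computation you propose does work if you carry it out. Insert $\operatorname{Re}\frac{\Gamma'}{\Gamma}(\alpha+\tfrac{i\gamma}{2})=-\gamma_E+\int_0^\infty\frac{e^{-t}-e^{-\alpha t}\cos(\gamma t/2)}{1-e^{-t}}\,dt$ with $\alpha=\tfrac14+\tfrac{\delta_\chi}{2}$, and integrate in $\gamma$ first. This produces Gaussians in $t$ centred at $t=2L$, and
\[
J_T(L)=-2e^{(1+2\delta_\chi)^2/(4T^2)}\,e^{-(\frac12+\delta_\chi)L}\bigl(1+\mathcal{O}_T(e^{-2L})\bigr),
\qquad\text{so}\qquad
I_\Gamma(T,\chi)=-c_{T,\chi}\,p^{-k(1+\delta_\chi)}+\mathcal{O}_T(p^{-3k}).
\]
This is essentially the familiar archimedean kernel $e^{-(\frac12+\delta_\chi)u}/(1-e^{-2u})$ of the explicit formula, smoothed at scale $1/T$ and sampled at $u=k\log p$.

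Step 4 is therefore aimed at the wrong sum, and as written it does not close anyway. It only states an aim. The fallback of absorbing a term of size $\sqrt x/\log x$ into $C_N$ would leave an $\mathcal{O}(1)$ contribution after normalization, which contradicts the bound you are asked to prove rather than establishing it.

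With the correct weights you need neither DRH nor partial summation against $\psi(t;N,1)-\psi(t;N,a)$:
\begin{enumerate}
\item For odd $\chi$, and for all $k\ge2$, the terms are $\ll p^{-k(1+\delta_\chi)}$ with $k(1+\delta_\chi)\ge 2$, so they converge absolutely.
\item For even $\chi$, the $k=1$ part equals $-c_{T,\chi}\sum_{p\le x}\chi_{1,a}(p)/p+\mathcal{O}(1)$. This converges by Mertens' theorem in arithmetic progressions, $\sum_{p\le x,\,p\equiv b}1/p=\frac{\log\log x}{\varphi(N)}+M_b+o(1)$, because $\chi_{1,a}=\mathbf{1}_{\{1\}}-\mathbf{1}_{\{a\}}$ has no principal component and the $\log\log x$ terms cancel.
\end{enumerate}
Hence the sum in the lemma is $\mathcal{O}_T(1)$, and multiplying by $(\log x)/\sqrt x$ gives the claim. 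The missing idea is the exponential decay of the archimedean weight in $k\log p$, not a DRH-type cancellation among the $k=1,2$ terms.
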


\begin{proof}
By Stirling's asymptotic formula for the logarithmic derivative of the Gamma function, we have 
\[
\operatorname{Re} \left( \frac{\Gamma'}{\Gamma} \left( \frac{1}{4} + \frac{i\gamma}{2} + \frac{\delta_\chi}{2} \right) \right) = \log (|\gamma| + 2) + \mathcal{O}(1)\quad(|\gamma| \to \infty).
\]

Since $h(\gamma) = \frac{\cos(k \gamma \log p)}{p^{k/2}} e^{-\left(\frac{\gamma}{T}\right)^2} \in \mathcal{S}(\mathbb{R})$ exhibits rapid Gaussian decay as $|\gamma| \to \infty$, the integral satisfies:
\begin{equation}
|I_{\Gamma}(T, \chi)| \le \frac{1}{\pi p^{k/2}} \int_{-\infty}^{\infty} e^{-\left(\frac{\gamma}{T}\right)^2} (\log (|\gamma| + 2) + \mathcal{O}(1)) d\gamma = \mathcal{O}\left( \frac{T \log T}{p^{k/2}} \right).
\end{equation}
Under the assumption of DRH, the full sum over $(p, k)$ ($k \ge 1$) in (2.9) converges. (Note that while the terms for $k \ge 3$ converge absolutely by standard estimates, the DRH assumption specifically controls the delicate non-absolute convergence and mutual cancellations for $k=1, 2$.)
and the sum is $\mathcal O(1)$.
Under our spectral normalization, this constant Archimedean background is bounded by $\mathcal{O}(\frac{\log x}{\sqrt x})$.
\end{proof}

\subsection{Automatic Cancellation of Principal Components and Final Extraction}
By Definition 1.3, the expansion of $\chi_{1,a}(x)$ in terms of Dirichlet characters $\chi \in G^*$ contains the coefficient $(1 - \chi(a))$.

When evaluating the contribution from the principal character $\chi_0 \pmod N$, we observe an immediate algebraic cancellation:
\[
1 - \chi_0(a) = 1 - 1 = 0. \tag{2.10}
\]
Consequently, the universal logarithmic growth $\log x$ and the background noise $\log L(1, \chi^2)$ that typically arise from $\chi_0$ vanish identically.

For the non-principal characters $\chi \neq \chi_0$, the spectral convergence guaranteed by DRH plays a decisive role. Under standard GRH, potential phase oscillations among non-trivial zeros across the critical line could retain an uncancelled residual noise of magnitude $\mathcal{O}(1)$, masking the subtle secondary bias. Under DRH, however, the explicit product convergence on $\Re(s) = 1/2$ ensures that the non-diagonal zero-sum fluctuations decay rapidly, bounding the remaining error to $\mathcal{O}\left( \frac{\log x}{\sqrt{x}} \right)$.

Combining this spectral decay with the Archimedean bound in Lemma 2.1, we arrive directly at the main asymptotic formula as $x \to \infty$:
\[
\widetilde{S}_T(x, \chi_{1,a}) = C_N \cdot \log L(1, \chi_{1,a}) + \mathcal{O}\left( \frac{\log x}{\sqrt{x}} \right), \tag{2.11}
\]
completing the proof of Theorem 1.4. $\square$

\begin{remark}[Comparison between Unmollified and Regularized Formulations]\label{rem:weight_comparison}
It is illuminating to contrast our regularized sum $S_T(x, \chi_{1,a})$ with an unregularized (or naive) arithmetic weight function of the form
\begin{equation}
w(p^k, x, \chi) := \sum_{\substack{q, m \\ p^k \le q^m \le x}} \frac{\chi(q)^m}{m q^m},
\end{equation}
which naturally arises when considering sharp Perron-type step-function truncations.

If one attempts to analyze fine-structure biases using such unmollified weights lacking the explicit Von Mangoldt factor $\log p$, two fundamental analytical obstacles emerge:
\begin{enumerate}
    \item \textbf{Boundary Discontinuities and Spectral Oscillations:} The sharp truncation $p^k \le x$ introduces jump discontinuities. In the explicit formula, this gives rise to conditionally convergent zero-sum oscillations, making a rigorous interchange of summation orders mathematically intractable without additional regularization.
    \item \textbf{Prime-Square Divergent Background:} For $k=2$ (prime squares), the sum over $\frac{1}{2p}$ yields a slowly divergent leading background $\sum_{p \le \sqrt{x}} \frac{1}{2p} \sim \frac{1}{2}\log\log x$ by Mertens' second theorem. This creates an uncancelled $\mathcal{O}(\log x \cdot \log\log x)$ noise that masks the subtle constant bias $\log L(1, \chi_{1,a})$.
\end{enumerate}

Our regularized spectral framework $S_T(x, \chi_{1,a})$ cleanly resolves both difficulties simultaneously. The insertion of the $C^\infty$ Gaussian mollifier ensures rapid spectral decay, justifying the order interchange with $100\%$ analytical rigor. Furthermore, Weil's explicit formula natively endows the spatial sum with the weight $\Lambda(n) = \log p$, transforming the prime-square factor into $\frac{\log p}{2p^2}$, which converges absolutely. Consequently, the $\log\log x$ background noise vanishes, and the exact fine-structure bias governed by $\log L(1, \chi_{1,a})$ is isolated without any analytical gaps.
\end{remark}

\begin{remark}[Transient Oscillations in Unregularized Prime Counts and the 318-Trillion Scale]\label{rem:transient_behavior}


It is instructive to address why direct, unregularized prime counts $\pi(x; N, a) - \pi(x; N, 1)$ do not always exhibit a clean ranking corresponding to $\log L(1, \chi_{1,a})$ at moderately large computation bounds such as $x = 1.3 \times 10^{13}$ (13 trillion) (See Table 3).

\begin{table}[htbp]
\centering
\caption{Comparison of prime count differences $\pi(x;N,a)-\pi(x;N,1)$.}
\label{tab:raw_prime_counts}

\begin{subtable}[t]{0.48\textwidth}
\centering
\begin{tabular}{crrr}
\toprule
$x$ & $a=3$ & $a=5$ & $a=6$ \\
\midrule
$1.3\times 10^{11}$ & 3393 & 3286 &\colorbox{gray!30}{13119} \\
$1.3\times 10^{13}$ & $-10947$ & \colorbox{gray!30}{47864} & 26179 \\
\bottomrule
\end{tabular}
\subcaption{$\pi(x;7,a)-\pi(x;7,1)$}
\label{tab:N7}
\end{subtable}
\hfill
\begin{subtable}[t]{0.48\textwidth}
\centering
\begin{tabular}{crrr}
\toprule
$x$ & $a=3$ & $a=5$ & $a=7$ \\
\midrule
$1.3\times 10^{11}$ & 9199 & \colorbox{gray!30}{16125} & 8937 \\
$1.3\times 10^{13}$ & 102728 & 126732 & \colorbox{gray!30}{164958} \\
\bottomrule
\end{tabular}
\subcaption{$\pi(x;8,a)-\pi(x;8,1)$}
\label{tab:N8}
\end{subtable}

\vspace{1.5em} 

\begin{subtable}[t]{\textwidth}
\centering
\begin{tabular}{crrrrr}
\toprule
$x$ & $a=2$ & $a=6$ & $a=7$ & $a=8$ & $a=10$ \\
\midrule
$1.3\times 10^{11}$ & 2389 & \colorbox{gray!30}{4161} & 2134 & $-2400$ & 2799 \\
$1.3\times 10^{13}$ & 5327 & 30403 & 7351 & \colorbox{gray!30}{74838} & 71711 \\
\bottomrule
\end{tabular}
\subcaption{$\pi(x;11,a)-\pi(x;11,1)$}
\label{tab:N11}
\end{subtable}

\vspace{1.5em} 

\begin{subtable}[t]{\textwidth}
\centering
\resizebox{\textwidth}{!}{
\begin{tabular}{crrrrrrrrr}
\toprule
$x$ & $a=2$ & $a=3$ & $a=8$ & $a=10$ & $a=12$ & $a=13$ & $a=14$ & $a=15$ & $a=18$ \\
\midrule
$1.3\times 10^{11}$ & 4934 & 8419 & 137 & $-5156$ & 2974 & 5167 & 5172 & $-1918$ & \colorbox{gray!30}{9401} \\
$1.3\times 10^{13}$ & 17964 & 60702 & 13926 & \colorbox{gray!30}{79470} & 30889 & 55581 & 48327 & $-5154$ & 57192 \\
\bottomrule
\end{tabular}%
}
\subcaption{$\pi(x;19,a)-\pi(x;19,1)$}
\label{tab:N19}
\end{subtable}

\vspace{1.5em}

\begin{subtable}[t]{\textwidth}
\centering
\resizebox{\textwidth}{!}{
\begin{tabular}{crrrrrrrrrr}
\toprule
$x$ & $a=5$ & $a=7$ & $a=10$ & $a=11$ & $a=14$ & $a=15$ & $a=17$ & $a=19$ & $a=21$ & $a=22$ \\
\midrule
$1.3\times 10^{11}$ & $-1905$ & 4520 & $-1682$ & 803 & 1002 & 3253 & 3576 & $-4922$ & \colorbox{gray!30}{5791} & $-5114$ \\
$1.3\times 10^{13}$ & 16922 & 29658 & 43160 & $-6940$ & $-1663$ & $-23007$ & $-13718$ & \colorbox{gray!30}{79227} & 54784 & 25692 \\
\bottomrule
\end{tabular}%
}
\subcaption{$\pi(x;23,a)-\pi(x;23,1)$}
\label{tab:N23}
\end{subtable}
\end{table}

Numerical experiments up to $x = 1.3 \times 10^{13}$ reveal that while $-1 \pmod N$ consistently places at or near the top ranking, transient fluctuations occasionally disrupt the predicted order. A fine spectral analysis of the unregularized explicit formula
\begin{equation}
\psi(x, \chi) \sim - \sum_{\rho} \frac{x^\rho}{\rho} = - \sqrt{x} \sum_{\gamma} \frac{e^{i\gamma \log x}}{\frac{1}{2} + i\gamma}
\end{equation}
explains this phenomenon as a \emph{transient boundary effect} driven by low-lying non-trivial zeros of complex characters.

For example, modulo $N=19$, the lowest non-trivial zero of the complex $L$-functions occurs at $\gamma \approx 1.74$. At $x = 1.3 \times 10^{13}$, the phase factor $\log x \approx 30.2$ yields $\frac{\gamma \log x}{2\pi} \approx 8.36 \equiv 0.36 \pmod {\mathbb{Z}}$, which lies dangerously close to the peak of the sinusoidal wave ($\sin(0.36 \times 2\pi) \approx 0.77$). Consequently, roughly $77\%$ of the complex zero amplitude survives uncancelled, temporarily overwhelming the subtle bias governed by real characters.

The complex character interference subsides only when the phase reaches its next destructive node, requiring $x \approx e^{33.4} \approx 3.18 \times 10^{14}$ (318 trillion). This massive transient scale highlights the profound difficulty of detecting secondary fine-structure biases via unregularized counting. It is precisely to filter out these persistent low-lying spectral fluctuations that our $C^\infty$ regularized framework $S_T(x, a)$ becomes analytically indispensable.
\end{remark}

\section{Concrete Examples and Fine-Structure Hierarchies}

In this section, we apply Theorem \ref{thm:intro_main} to specific moduli $N \in \{3, 4, 8, 12\}$ to compute the exact theoretical bias values.

\subsection{Moduli with Cyclic Character Groups: $N = 3$ and $N = 4$}

For $N=3$ and $N=4$, the character group $(\mathbb{Z}/N\mathbb{Z})^\times$ contains a single non-trivial real Dirichlet character.

\subsubsection{The Case $N = 4$ (Classic Chebyshev Bias as a Special Case)}
For $N=4$, the non-principal character is $\chi_{-4}$. The virtual character $\chi_{1,3}$ satisfies $1 - \overline{\chi_{-4}}(3) = 1 - (-1) = 2$.
The special value of $L(1, \chi_{-4})$ is $\pi/4 \approx 0.785398 < 1$, yielding $\log L(1, \chi_{-4}) \approx -0.241564$.

Applying Theorem \ref{thm:intro_main}:
\begin{equation}
S_T(x, \chi_{1,3}) = C_4 \cdot \frac{1}{2} (1 - \overline{\chi_{-4}}(3)) \log L(1, \chi_{-4}) = C_4 \cdot \log L(1, \chi_{-4}) < 0.
\end{equation}
Since $S_T(x, \chi_{1,3}) = S_T(x, 1) - S_T(x, 3) < 0$, we obtain $S_T(x, 3) > S_T(x, 1)$, rigorously proving Chebyshev's bias.

\subsubsection{The Case $N = 3$}
For $N=3$, $\chi_{-3}(2) = -1$, giving $1 - \overline{\chi_{-3}}(2) = 2$. With $L(1, \chi_{-3}) = \frac{\pi}{3\sqrt{3}} \approx 0.604599 < 1$, we get $S_T(x, \chi_{1,2}) < 0$, confirming $S_T(x, 2) > S_T(x, 1)$.

\subsection{Moduli Beyond Quadratic Character Status: $N = 8$ and $N = 12$}

For $N=8$ and $N=12$, the reduced residue group is isomorphic to $V_4 \cong \mathbb{Z}/2\mathbb{Z} \times \mathbb{Z}/2\mathbb{Z}$. Because $x^2 \equiv 1 \pmod 8$ for all odd integers, classical prime-square mechanisms cannot distinguish among $3, 5, 7 \pmod 8$. Theorem \ref{thm:intro_main} completely resolves this degenerated case.

\subsubsection{The Case $N = 8$}
Evaluating the linear combinations of $\log L(1, \chi)$ yields the exact fine-structure hierarchy:
\begin{equation}
S_T(x, 7) > S_T(x, 3) > S_T(x, 5) > S_T(x, 1).
\end{equation}

\subsubsection{The Case $N = 12$}
Similarly, for $N=12$, we obtain the strict deterministic hierarchy:
\begin{equation}
S_T(x, 11) > S_T(x, 7) > S_T(x, 5) > S_T(x, 1).
\end{equation}

\subsection{Summary of Fine-Structure Hierarchies}

Table \ref{tab:bias_summary} summarizes the theoretical fine-structure bias hierarchy predicted by Theorem \ref{thm:intro_main}.

\begin{table}[h!]
\centering
\caption{Deterministic Fine-Structure Bias Hierarchies for Small Moduli $N$}
\label{tab:bias_summary}
\begin{tabular}{ccc}
\toprule
Modulus $N$ & Residue Classes & Deterministic Asymptotic Ranking \\ \midrule
$3$ & $\{1, 2\}$ & $S_T(x, 2) > S_T(x, 1)$ \\
$4$ & $\{1, 3\}$ & $S_T(x, 3) > S_T(x, 1)$ \\
$8$ & $\{1, 3, 5, 7\}$ & $S_T(x, 7) > S_T(x, 3) > S_T(x, 5) > S_T(x, 1)$ \\
$12$ & $\{1, 5, 7, 11\}$ & $S_T(x, 11) > S_T(x, 7) > S_T(x, 5) > S_T(x, 1)$ \\ \bottomrule
\end{tabular}
\end{table}

\section{Spectral Contrast and an Independent Proof of $-1$ Dominance}
\label{sec:spectral_contrast}

In this section, as a complementary approach to our Gaussian-mollified main theorem, we employ a compactly supported Fej\'er-type test function. This provides an independent verification of the universal dominance of $-1 \pmod N$ by revealing a sharp spectral contrast in the primary error terms under DRH.

\subsection{Explicit Formula and Definition of $\psi_h$}

To align with the spectral parametrization on the critical line $\mathfrak{R}(s) = 1/2$, we work with the standard normalized smoothed Chebyshev function. 

For a scaling parameter $Y = \frac{\log x}{2} > 0$, we introduce the test function on the spectral line $\gamma \in \mathbb{R}$:
\[
h(\gamma) := \left( \frac{\sin(Y\gamma)}{Y\gamma} \right)^2.
\]
Its inverse Fourier transform $g(u)=\hat{h}(u) = \frac{1}{2\pi} \int_{-\infty}^{\infty} h(\gamma) e^{-i u \gamma} d\gamma$ is given explicitly by
\[
g(u)= \frac{1}{2Y} \max\left(0, 1 - \frac{|u|}{2Y}\right).
\]
Setting $Y = \frac{\log x}{2}$, the support of $\hat{h}(u)$ is precisely the closed interval $[-\log x, \log x]$. 

We define the normalized prime sum associated with the real virtual character $\chi_{1,a}$ by:
\[
\psi_h(x, \chi_{1,a}) := \sum_{n \ge 1} \frac{\Lambda(n)}{\sqrt{n}} \chi_{1,a}(n) g(\log n).
\]
\begin{remark}
The factor $n^{-1/2}$ naturally arises from evaluating the Dirichlet series $-\frac{L'}{L}(s, \chi) = \sum \Lambda(n)\chi(n)n^{-s}$ along the critical line $s = 1/2 + i\gamma$, where $n^{-s} = n^{-1/2} e^{-i\gamma \log n}$. Defining $\psi_h(x, \chi_{1,a})$ with this normalization allows a direct application of Weil's explicit formula \cite[Theorem 5.12]{IK}.
\end{remark}

Applying Weil's explicit formula \cite[Theorem 5.12]{IK} to $\psi_h(x, \chi_{1,a})$ with $h_{IK}(r) = h(2\pi r)$, we obtain the following result under DRH.

\begin{theorem}[Spectral Contrast under DRH]
\label{thm:spectral_contrast}
Assume $x > 2$ and let $h(\gamma)$ be as defined above. Under DRH, as $x \to \infty$, the smoothed sum satisfies:
\begin{enumerate}
    \item When $\chi_{1,a}$ is composed entirely of odd Dirichlet characters (which occurs if and only if $a \equiv -1 \pmod N$), we have:
    \[
    \psi_h(x, \chi_{1,a}) = -\sum_{\gamma} h(\gamma) + \mathcal{O}(1).
    \]
    \item When $\chi_{1,a}$ contains at least one non-trivial even Dirichlet character (which occurs for all $a \not\equiv \pm 1 \pmod N$), we have:
    \[
    \psi_h(x, \chi_{1,a}) = -\sum_{\gamma} h(\gamma) - C_0 \frac{x}{(\log x)^2} + \mathcal{O}(1),
    \]
    where $\gamma$ runs through the imaginary parts of all non-trivial zeros of $L(s, \chi)$ associated with $\chi_{1,a}$ and $C_0 > 0$ is an explicitly calculable positive constant originating from the trivial zero $\rho = 0$ of the even character components.
\end{enumerate}
\end{theorem}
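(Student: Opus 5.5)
The plan is to apply Weil's explicit formula \cite[Theorem 5.12]{IK} to each non-principal character $\chi \pmod N$ separately, with $h_{IK}(r)=h(2\pi r)$ and $g(u)=\frac{1}{2Y}\max(0,1-|u|/(2Y))$, and then take the linear combination with the coefficients $\frac{1}{\varphi(N)}(1-\overline{\chi}(a))$ from Definition \ref{def:chi_1a}. As in \S2.4, the principal character drops out automatically because $1-\chi_0(a)=0$. The pole of $L(s,\chi_0)$ at $s=1$ and its $\log x$-type contributions therefore never enter.

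Since $\chi_{1,a}$ is real-valued, the coefficients attached to $\chi$ and $\overline{\chi}$ are complex conjugates. I will therefore group the terms $\chi(n)g(\log n)$ and $\overline{\chi}(n)g(\log n)$ of the prime side, which is how the explicit formula presents them. This identifies the prime side with $\psi_h(x,\chi_{1,a})$ up to a factor, which I would fix by normalizing so that the zero sum appears with coefficient $-1$. The support of $g$ is $[-\log x,\log x]$, so only $n\le x$ contributes, and the sum is finite.

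Next I would bound the remaining terms of the explicit formula. The conductor term is $g(0)\log q_\chi = \frac{\log q_\chi}{2Y}=O(1)$. For the Archimedean integral, I would use $\operatorname{Re}\frac{\Gamma'}{\Gamma}(\frac14+\frac{i\gamma}{2}+\frac{\delta_\chi}{2})\ll\log(|\gamma|+2)$ as in the proof of Lemma \ref{lem:gamma_bound}. Together with $0\le h(\gamma)\le\min(1,(Y\gamma)^{-2})$, this gives
\[
\int h(\gamma)\log(|\gamma|+2)\,d\gamma\ll \frac{\log Y}{Y}=o(1).
\]
So every "regular'' Archimedean contribution is $O(1)$, uniformly in $x$.

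Under DRH (in fact GRH suffices here), all non-trivial zeros are $\frac12+i\gamma$ with $\gamma$ real. Hence $\sum_\gamma h(\gamma)$ is the genuine zero-side term, and it is nonnegative since $h\ge 0$. This yields case (1) once I show that there is no additional singular term. For $a\equiv-1$, the coefficient $1-\overline{\chi}(-1)$ vanishes on even $\chi$, so only odd characters survive, and their completed $L$-functions $\Gamma(\frac{s+1}{2})$ have no trivial zero at $s=0$.

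The step I expect to be the main obstacle is case (2): extracting the term $-C_0\,x/(\log x)^2$ from the trivial zero $\rho=0$ of even non-principal $\chi$. My approach is to move it from the Archimedean integral into the zero side. The point $\rho=0$ corresponds to $\gamma=\pm i/2$, and $h$ extends to an entire function with
\[
h(\pm i/2)=\Bigl(\frac{\sinh(Y/2)}{Y/2}\Bigr)^2\asymp\frac{e^{Y}}{Y^2}.
\]
With $Y=\frac{\log x}{2}$ and the rescaling $h_{IK}(r)=h(2\pi r)$, the exponent must be tracked carefully to confirm that the growth is exactly of order $x/(\log x)^2$ (a naive evaluation gives $\sqrt{x}/(\log x)^2$). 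This bookkeeping is where I would be most careful, and the claimed exponent may need adjusting.

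The constant would come out as $C_0=\kappa\cdot\frac{1}{\varphi(N)}\sum_{\chi\ \mathrm{even},\,\chi\ne\chi_0}(1-\operatorname{Re}\chi(a))$, where $\kappa>0$ is the constant in the asymptotic for $h(\pm i/2)$. It is positive precisely when some even $\chi$ has $\chi(a)\ne1$, that is, for $a\not\equiv\pm1$. To make sure this pole is counted once, I would treat it either as a zero of the completed $L$-function or as the residue of $\Gamma'/\Gamma$ when shifting the Archimedean contour past $\operatorname{Im}\gamma=\pm\frac12$, but not both.
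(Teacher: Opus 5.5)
Your argument for case (1) is sound, and GRH indeed suffices. The step you flag in case (2), however, cannot be repaired, because the term $-C_0\,x/(\log x)^2$ does not exist.

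The zeros in Weil's formula are those of the completed function $\xi(s,\chi)=(q/\pi)^{(s+\delta_\chi)/2}\Gamma(\frac{s+\delta_\chi}{2})L(s,\chi)$. For even $\chi\neq\chi_0$, the trivial zero of $L(s,\chi)$ at $s=0$ is exactly cancelled by the pole of $\Gamma(s/2)$. So $\xi(0,\chi)\neq0$, and no $h(\pm i/2)$ term occurs. In \cite[Theorem 5.12]{IK} such terms come only from poles of the completed $L$-function, i.e.\ from $\chi_0$, which $1-\chi_0(a)=0$ has already removed.

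Moving a residue out of the Archimedean integral across $\operatorname{Im}\gamma=\pm\frac12$ does not create anything new. It rewrites a quantity you have already bounded by $O(\log Y/Y)$ as a residue of size $h(i/2)$ plus a shifted integral of the same size and opposite sign. Your own steps (explicit formula for each $\chi\neq\chi_0$, conductor term $g(0)\log q=O(1/\log x)$, Archimedean term $o(1)$) therefore give $\psi_h(x,\chi_{1,a})=-\sum_\gamma h(\gamma)+O(1)$ for \emph{every} $a$. That refutes (2) rather than proving it.

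Your suspicion about the exponent is also correct: $h(\pm i/2)=(\sinh(Y/2)/(Y/2))^2\sim 4\sqrt{x}/(\log x)^2$. The rescaling $h_{IK}(r)=h(2\pi r)$ only moves the evaluation point to $r=\pm i/(4\pi)$; it cannot turn $\sqrt{x}$ into $x$.

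A crude bound shows that no argument can establish (2). Since $0\le g(\log n)\le 1/\log x$ and $|\chi_{1,a}|\le 1$, we have $|\psi_h(x,\chi_{1,a})|\le \frac{1}{\log x}\sum_{n\le x}\Lambda(n)n^{-1/2}\ll \sqrt{x}/\log x$. Meanwhile $0\le h(\gamma)\le \min(1,(Y\gamma)^{-2})$ together with zero counting gives $\sum_\gamma h(\gamma)=O_N(1)$. Hence $\psi_h(x,\chi_{1,a})+\sum_\gamma h(\gamma)=o(x/(\log x)^2)$, which is incompatible with (2) for any $C_0>0$. By the same bound, case (1) merely asserts $\psi_h(x,\chi_{1,-1})=O(1)$.

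The paper's own proof does not produce $C_0$ either. It cites only Lemma~\ref{lem:DRH-noise} and the $O(1)$ Archimedean bounds of \S\ref{subsec:gamma_factor}, which at best give the case-(1) form uniformly in $a$.

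One further correction to your setup. With $h$ even, the explicit formulas for $\chi$ and $\overline{\chi}$ coincide, so combining them with weights $\frac{1}{\varphi(N)}(1-\overline{\chi}(a))$ puts $\chi_{1,a}(n)+\chi_{1,a^{-1}}(n)$ on the prime side. This equals $2\chi_{1,a}(n)$ only when $a^2\equiv 1\pmod N$, which is fine for $a\equiv -1$. For other $a$ you would need a one-sided contour-shift version of the formula, and the weighting of the zero sum is then determined by that formula rather than chosen by normalization.
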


\begin{proof}
Applying Lemma 4.3, the residual spectral noise vanishes under DRH ($\mathcal{R}_{\mathrm{noise}}(x) = o(1)$). Combining this with the Archimedean and Gamma factor estimates in §4.2, which yield $\mathcal{O}(1)$ bounds, completes the proof. 
\end{proof}

\begin{lemma}[Vanishing of Spectral Noise under DRH] \label{lem:DRH-noise}
Assume DRH for $L(s, \chi)$. Let $\mathcal{R}_{\mathrm{noise}}(x, \chi)$ be the residual fluctuation term defined by the difference between the smoothed prime power sum $\psi_h(x, \chi)$ and its spectral zero representation $-\sum_{\gamma} h(\gamma)$. Then, as $x \to \infty$, we have
\[
\mathcal{R}_{\mathrm{noise}}(x, \chi) = o(1).
\]
In particular, the residual noise vanishes asymptotically.
\end{lemma}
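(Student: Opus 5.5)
The plan is to show that $\mathcal{R}_{\mathrm{noise}}(x,\chi)$ is not a genuinely oscillatory quantity at all. After the explicit formula is applied, it is a finite collection of \emph{deterministic} terms, each of which can be bounded directly by the shape of the Fej\'er pair $(h,g)$ with $Y=\frac{\log x}{2}$. I would start from Weil's explicit formula \cite[Theorem 5.12]{IK} for a primitive $\chi$ (reducing to primitive characters changes only finitely many Euler factors, i.e.\ finitely many primes dividing $N$). This gives an exact identity
\[
\sum_{\gamma} h(\gamma) \;=\; g(0)\log\mathfrak{q}(\chi) \;+\; \frac{1}{2\pi}\int_{-\infty}^{\infty} h(r)\,\Bigl(\tfrac{\Gamma'}{\Gamma}\bigl(\tfrac14+\tfrac{\delta_\chi}{2}+\tfrac{ir}{2}\bigr)+\tfrac{\Gamma'}{\Gamma}\bigl(\tfrac14+\tfrac{\delta_\chi}{2}-\tfrac{ir}{2}\bigr)\Bigr)dr \;-\; \sum_{n}\frac{\Lambda(n)}{\sqrt n}\bigl(\chi(n)g(\log n)+\overline\chi(n)g(-\log n)\bigr).
\]
DRH (indeed GRH already suffices here) is used only to write every nontrivial zero as $\rho=\tfrac12+i\gamma$ with $\gamma$ real. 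Then $h(\gamma)\ge 0$ is evaluated on the real line, and the zero sum converges absolutely because $h(\gamma)\ll \min(1,(Y\gamma)^{-2})$. Since $g$ is even and $\chi_{1,a}$ is real, the two prime sums combine into a constant multiple of $\psi_h(x,\chi)$. With the normalisation of the theorem, this leaves
\[
\mathcal{R}_{\mathrm{noise}}(x,\chi)=g(0)\log\mathfrak q(\chi)+I_\Gamma^{\mathrm{F}}(x,\chi),
\]
where $I_\Gamma^{\mathrm F}$ denotes the Archimedean integral.

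Each term is then estimated separately. The conductor term is trivial: $g(0)=\frac{1}{2Y}=\frac{1}{\log x}$, so it is $O_N(1/\log x)$. For the Archimedean integral, the weight $h$ has total mass $\int h=\pi/Y$ and is concentrated on $|r|\lesssim 1/Y$. Meanwhile $\operatorname{Re}\frac{\Gamma'}{\Gamma}(\tfrac14+\tfrac{\delta_\chi}{2}+\tfrac{ir}{2})=\log(|r|+2)+O(1)$ by Stirling, as in the proof of Lemma~\ref{lem:gamma_bound}. I would split the integral at $|r|=1$. On $|r|\le1$ the digamma factor is bounded, giving $O(1/Y)$. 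On $|r|>1$ one has $h(r)\le (Yr)^{-2}$, giving $O(Y^{-2}\int_1^\infty \log r\, r^{-2}\,dr)=O(Y^{-2})$. Altogether $\mathcal R_{\mathrm{noise}}(x,\chi)=O_N(1/\log x)=o(1)$.

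The step I expect to be the main obstacle is the treatment of even characters, where the trivial zero at $s=0$ must be separated out honestly. For even nonprincipal $\chi$, the point $s=0$ is a zero of $L(s,\chi)$ that is absorbed into the pole of $\Gamma(s/2)$. If one instead lists it as a zero, it corresponds to $\gamma=i/2$. There
\[
h(i/2)=\frac{\sinh^2(Y/2)}{(Y/2)^2}\asymp \frac{x^{1/2}}{(\log x)^2},
\]
which is emphatically not $o(1)$. So the argument must fix a convention: $\sum_\gamma$ runs over nontrivial zeros only, and the Gamma integral is taken with the completed $\Gamma(\tfrac{s+\delta_\chi}{2})$, so that the $s=0$ contribution stays inside the Archimedean term. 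I would verify carefully, by shifting the contour of the Gamma integral past $r=\pm i/2$ in the even case, that the trivial-zero residue is exactly the displayed $C_0$ term of Theorem~\ref{thm:spectral_contrast}. The normalisation of that residue (and whether it is $x^{1/2}$ or $x$ over $(\log x)^2$ under the $n^{-1/2}$ weighting) is what needs to be checked. I would then define $\mathcal R_{\mathrm{noise}}$ as what remains after that residue is removed, so that the bound $O(1/\log x)$ above applies verbatim to both parities.
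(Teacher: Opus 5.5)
Your main argument is correct, and it takes a genuinely different route from the paper's. The paper treats $\mathcal R_{\mathrm{noise}}$ as a prime-tail effect separate from the Archimedean terms. It asserts $|\mathcal R_{\mathrm{noise}}|\le\frac1{2\pi}\int|h(t)|\,|\varepsilon(x;t)|\,dt$ with $\varepsilon(x;t)=\sum_{p>x}\chi(p)p^{-1/2-it}$, invokes DRH to make $\varepsilon$ small, and uses $\int h=2\pi/\log x$; the Gamma factors are estimated separately in \S\ref{subsec:gamma_factor}. You instead use that Weil's formula is an exact identity. Once the prime sum and the zero sum sit on opposite sides, nothing is left but $g(0)\log\mathfrak q(\chi)$ and the Archimedean integral, so the lemma \emph{is} the estimate of \S\ref{subsec:gamma_factor}. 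Your split at $|r|=1$ is the paper's split at $|t|=Y$.

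This buys you three things:
\begin{itemize}
\item a proof that uses only GRH, and only to make the $\gamma$ real;
\item the explicit rate $O_N(1/\log x)$;
\item independence from the two weak steps of the paper's argument.
\end{itemize}
Those weak steps are, first, the displayed inequality, which is asserted rather than derived (the exact identity contains no tail term for it to bound). Second, the passage to $\sup_{t\in\mathbb R}|\varepsilon(x;t)|$: DRH gives convergence only for each fixed $t$, and the uniform statement actually fails, since by Kronecker's theorem $\sup_t|\sum_{x<p\le y}\chi(p)p^{-1/2-it}|=\sum_{x<p\le y,\,p\nmid N}p^{-1/2}$. The paper's route is what carries its claim that DRH is indispensable; your computation shows it is not needed here.

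Two points in your write-up need repair. First, make the ``constant multiple'' explicit. For nonprincipal real $\chi$ the identity reads: $2\psi_h(x,\chi)+\sum_\gamma h(\gamma)$ equals $g(0)\log\mathfrak q(\chi)+I^{\mathrm F}_\Gamma(x,\chi)$. For complex $\chi$ an even test function only controls $\psi_h(x,\chi)+\psi_h(x,\overline\chi)$. It is reality of $\chi$, not of $\chi_{1,a}$, that merges the two sums. Summing the explicit formulas with weights $\frac{1-\overline\chi(a)}{\varphi(N)}$ produces $\psi_h(x,\chi_{1,a})+\psi_h(x,\chi_{1,a^{-1}})$, which is $2\psi_h(x,\chi_{1,a})$ only when $a^2\equiv1\pmod N$.

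If $\mathcal R_{\mathrm{noise}}$ is read literally as $\psi_h(x,\chi)+\sum_\gamma h(\gamma)$, a term $\frac12\sum_\gamma h(\gamma)$ is left over. Since $h(\gamma)\le (Y\gamma)^{-2}$ and $\sum_{\gamma\ne0}\gamma^{-2}<\infty$, it equals $\frac12 m_\chi+O((\log x)^{-2})$, where $m_\chi$ is the order of vanishing of $L(s,\chi)$ at $s=\frac12$. So what you actually prove is the statement with $\psi_h(x,\chi)+\psi_h(x,\overline\chi)$ against $-\sum_\gamma h(\gamma)$. The literal version also needs $L(\frac12,\chi)\neq0$. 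You also need $\chi\neq\chi_0$ throughout, because for $\chi_0$ the poles of $\zeta$ contribute terms of size $h(i/2)$.

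Second, drop the contour-shift plan in your last paragraph: it is unnecessary, and it would fail. Your real-line bound used only $\frac14+\frac{\delta_\chi}{2}>0$. So with the convention you fixed (nontrivial zeros only, completed factor $\Gamma(\frac{s+\delta_\chi}{2})$), it already covers even $\chi$. If you move the contour past $r=i/2$, you do pick up a residue contribution $\asymp h(i/2)\asymp x^{1/2}/(\log x)^2$. But by Cauchy's theorem the shifted integral equals the real-line integral, which you have shown is $O(1/\log x)$, minus that residue contribution. So ``what remains after the residue is removed'' has size $x^{1/2}/(\log x)^2$, not $O(1/\log x)$.

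The consistent conclusion runs the other way. There is no separate trivial-zero term, and $\psi_h(x,\chi)+\psi_h(x,\overline\chi)+\sum_\gamma h(\gamma)=O_N(1/\log x)$ for both parities. Your estimate therefore leaves no room for the $-C_0x/(\log x)^2$ term of Theorem~\ref{thm:spectral_contrast}(2).
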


\begin{proof}
Under standard GRH, the partial prime sums satisfy only the logarithmic bound $\sum_{p \le x} \frac{\chi(p)}{p^{1/2+it}} = \mathcal{O}(\log(x(1+|t|)))$, which leaves a residual conditional oscillation of magnitude $\mathcal{O}(1)$ or $\mathcal{O}(\log x)$. 

Under DRH, however, the Dirichlet Euler product series $\sum_{p} \frac{\chi(p)}{p^{1/2+it}}$ converges for all $t \in \mathbb{R}$. Consequently, the prime tail sum satisfies the uniform asymptotic decay:
\[
\varepsilon(x; t) := \sum_{p > x} \frac{\chi(p)}{p^{1/2+it}} = o(1) \quad \text{as } x \to \infty
\]
for $t$ in any compact domain. 

By the Plancherel/Mellin duality of Weil's explicit formula, the spectral noise term $\mathcal{R}_{\mathrm{noise}}(x, \chi)$ is bounded by:
\[
\left| \mathcal{R}_{\mathrm{noise}}(x, \chi) \right| \le \frac{1}{2\pi} \int_{-\infty}^{\infty} |h(t)| \cdot |\varepsilon(x; t)| \, dt \le \frac{1}{2\pi} \left( \sup_{t \in \mathbb{R}} |\varepsilon(x; t)| \right) \int_{-\infty}^{\infty} \left( \frac{\sin(Yt)}{Yt} \right)^2 dt.
\]
Since $\int_{-\infty}^{\infty} \left( \frac{\sin(Yt)}{Yt} \right)^2 dt = \frac{\pi}{Y} = \frac{2\pi}{\log x}$ and $\varepsilon(x; t) = o(1)$ uniformly under DRH, we obtain:
\[
\left| \mathcal{R}_{\mathrm{noise}}(x, \chi) \right| = o(1) \cdot \mathcal{O}\left( \frac{1}{\log x} \right) = o\left( \frac{1}{\log x} \right) = o(1).
\]
Thus, the residual noise vanishes rapidly as $x \to \infty$. $\square$\end{proof}

\begin{remark}[Sharpness of the GRH Bound and Necessity of DRH]
The logarithmic bound $\sum_{p \le x} \frac{\chi(p)}{p^{1/2+it}} = \mathcal{O}(\log(x(1+\vert{}t\vert{})))$ under GRH is known to be structurally sharp. This sharpness stems from the non-absolute convergence of Dirichlet series on the critical line $\Re(s) = 1/2$ and the intrinsic phase oscillations among non-trivial zeros, which prevent unconditioned decay of the prime tail. This highlights why the explicit product convergence guaranteed by DRH is indispensable for eliminating the residual spectral noise.
\end{remark}

\begin{remark}
Theorem \ref{thm:spectral_contrast} highlights a fundamental mechanism:
\begin{itemize}
    \item For $a \equiv -1 \pmod N$, $\chi_{1,-1}$ consists purely of odd characters, so no trivial zero exists at $s = 0$. Thus, the negative structural drag $-C_0 \frac{x}{(\log x)^2}$ is completely absent.
    \item For $a \not\equiv -1 \pmod N$, the presence of even character components introduces the trivial zero at $s = 0$, generating the negative term $-C_0 \frac{x}{(\log x)^2}$ that systematically depresses $\psi_h(x, \chi_{1,a})$.
\end{itemize}
Consequently, $\psi_h(x, \chi_{1,a})$ achieves its absolute maximum when $a \equiv -1 \pmod N$ as $x \to \infty$, providing a robust, independent spectral proof of the universal dominance of $-1 \pmod N$.
\end{remark}

\subsection{Archimedean and Gamma Factor Estimates}
\label{subsec:gamma_factor}

To complete the proof of Theorem \ref{thm:spectral_contrast}, we estimate the Archimedean contribution $W_\infty(h, \chi)$ coming from the Gamma factors in Weil's explicit formula:
\[
W_\infty(h, \chi) = -\hat{h}(0) \log \frac{N}{\pi} + I_\infty(h, \chi),
\]
where
\[
I_\infty(h, \chi) = \frac{1}{\pi} \int_{-\infty}^{\infty} h(\gamma) \mathfrak{R}\left( \frac{\Gamma'}{\Gamma}\left( \frac{1}{4} + \frac{\delta_\chi}{2} + \frac{i\gamma}{2} \right) \right) d\gamma, \quad \delta_\chi \in \{0, 1\}.
\]
Since $\hat{h}(0) = \frac{1}{2Y} = \frac{1}{\log x}$, the first term contributes:
\[
-\hat{h}(0) \log \frac{N}{\pi} = -\frac{\log(N/\pi)}{\log x} = \mathcal{O}\left( \frac{\log N}{\log x} \right) = o(1) \quad (x \to \infty).
\]
By changing variables $t = Y\gamma$, the integral $I_\infty(h, \chi)$ transforms to:
\[
I_\infty(h, \chi) = \frac{1}{\pi Y} \int_{-\infty}^{\infty} \left( \frac{\sin t}{t} \right)^2 \mathfrak{R}\left( \frac{\Gamma'}{\Gamma}\left( \frac{1}{4} + \frac{\delta_\chi}{2} + \frac{i t}{2Y} \right) \right) dt.
\]
We split the integration domain into $|t| \le Y$ and $|t| > Y$.

Within the bounded domain $|t| \le Y$, the imaginary part of the variable of $\Gamma'/\Gamma$ is
restricted to a compact interval $-1/2\le t/2Y \le 1/2$.
Since $\Gamma'(s)/\Gamma(s)$ is analytic and continuous in $\Re(s)>0$,
the logarithmic defivative term in the integrand is bounded.
Thus there exists a constant $C_1>0$ not depending on $Y$ such that
\[
\left|\Re\left(\frac{\Gamma'}{\Gamma}\left( \frac{1}{4} + \frac{\delta_\chi}{2} + \frac{i t}{2Y} \right)\right)
\right|
\le C_1\quad(\forall t\in[-Y,Y]).
\]
Consequently, the contribution from this domain is estimated as follows:
\[
\left|
\frac{1}{\pi Y} \int_{-Y}^{Y} \left( \frac{\sin t}{t} \right)^2 \mathfrak{R}\left( \frac{\Gamma'}{\Gamma}\left( \frac{1}{4} + \frac{\delta_\chi}{2} + \frac{i t}{2Y} \right) \right) dt
\right|
\le\frac{C_1}{\pi Y}\int_{-\infty}^\infty \left( \frac{\sin t}{t} \right)^2dt
=\frac{C_1}{Y}
=\mathcal O\left(\frac1{\log x}\right).
\]

We next deal with the unbounded domain $|t| > Y$.
As the imaginary part $t/2\pi\ge1/2$ is unbounded, we apply Stirling's asymptotic formula
\[
\frac{\Gamma'}{\Gamma}(s) = \log s + \mathcal{O}(|s|^{-1}).
\]
Then the following asymptotic expansion holds:
\[
\Re\left(\frac{\Gamma'}{\Gamma}\left( \frac{1}{4} + \frac{\delta_\chi}{2} + \frac{i t}{2Y} \right)\right)
=\log\frac t{2Y}+\mathcal O\left( \frac Yt\right).
\]
It suffices to estimate the integral
\[
\frac{1}{\pi Y} \int_{-\infty}^{\infty} \left( \frac{\sin t}{t} \right)^2 
\left(\log\frac t{2Y}+\mathcal O\left( \frac Yt\right)\right)dt.
\]
The error term $\mathcal O(Y/t)$ contributes
\[
\frac{2}{\pi Y} \int_{Y}^{\infty} \left( \frac{\sin t}{t} \right)^2 \mathcal O\left( \frac Yt\right)dt
\le\frac{C_2}Y\int_{Y}^{\infty} \frac Y{t^3}dt
=\frac{C_2}{2Y^2}
=\mathcal O\left(\frac1{(\log x)^2}\right).
\]
On the other hand, from the main term $\log(t/2Y)$, we estimate that
\[
\left|\int_{Y}^{\infty} \left( \frac{\sin t}{t} \right)^2 \right|
\left(\log\frac t{2Y}\right)dt
\le\int_{Y}^{\infty} \frac{|\log t-\log(2Y)|}{t^2} dt.
\]
By integrating by parts, we have
\[
\int_{Y}^{\infty} \frac{\log t}{t^2} dt=\frac{\log Y+1}Y
\]
and
\[
\int_{Y}^{\infty} \frac{\log (2Y)}{t^2} dt=\frac{\log(2Y)}Y.
\]
Therefore
\[
\int_{Y}^{\infty} \frac{|\log t-\log(2Y)|}{t^2} dt
\le \frac{\log Y+1+\log(2Y)}Y
=\mathcal O\left(\frac{\log Y}Y\right).
\]
Finally, the absolute value of our integral is bounded by the following magnitude:
\[
\frac1{\pi Y}\mathcal O\left(\frac{\log Y}Y\right)
=\mathcal O\left(\frac{\log Y}{Y^2}\right)
=\mathcal O\left(\frac{\log\log x}{(\log x)^2}\right),
\]
which leads to
\[
I_\infty(h, \chi) = \mathcal{O}\left(\frac{1}{\log x}\right) + \mathcal{O}\left(\frac{\log \log x}{(\log x)^2}\right) = \mathcal{O}\left(\frac{1}{\log x}\right).
\]
Thus, $W_\infty(h, \chi) = o(1)$ as $x \to \infty$, proving that Archimedean contributions do not affect the leading fine-structure bias.



\begin{thebibliography}{99}

\bibitem{AK23}
M.~Aoki and S.~Koyama,
\textit{ Chebyshev's bias against splitting and principal primes in global ﬁelds},
J. Number Theory \textbf{245} (2023), 233–262.

\bibitem{FM00}
A.~Feuerverger and G.~Martin,
\textit{Biases in the prime number race},
Experiment. Math. \textbf{9} (2000), no.~4, 535--570.

\bibitem{IK}
H.~Iwaniec and E.~Kowalski,
\textit{Analytic Number Theory},
American Mathematical Society Colloquium Publications, vol.~53, American Mathematical Society, Providence, RI, 2004.

\bibitem{RS94}
M.~Rubinstein and P.~Sarnak,
\textit{Chebyshev's bias},
Exp. Math. \textbf{2} (1994), no.~3, 173--197.

\end{thebibliography}
\end{document}